\newtheorem{theorem}{Theorem}[section]
\newtheorem{lemma}[theorem]{Lemma}
\theoremstyle{definition}
\newtheorem{remark}[theorem]{Remark}
\title[Normalized ground states for nonlinear Schr\"{o}dinger  equations]
{Normalized ground state solutions for nonlinear Schr\"{o}dinger equations with general Sobolev critical nonlinearities} 
\author[Manting Liu and Xiaojun Chang]{}
\subjclass{Primary: 35Q55; Secondary: 35J20, 35J60, 47J30.}
\keywords{Normalized solutions, nonlinear Schr\"odinger equation, Sobolev critical growth, variational methods.}
\thanks{The second author is supported by  NSFC (11971095).}
\thanks{$^*$Corresponding author: Xiaojun Chang}
\begin{document}
\maketitle

\centerline{\scshape
Manting Liu$^{{\href{mailto:liumt679@nenu.edu.cn}{\textrm{\Letter}}}}$
and Xiaojun Chang$^{{\href{mailto:changxj100@nenu.edu.cn}{\textrm{\Letter}}}*}$}

\medskip

{\footnotesize
 \centerline{School of Mathematics and Statistics \& Center for Mathematics and Interdisciplinary Sciences}
   \centerline{ Northeast Normal University, Changchun 130024,
China}
} 






\begin{abstract}
\noindent This paper is concerned with the existence of normalized solutions for nonlinear Schr\"{o}dinger equations.
The nonlinearity has a Sobolev critical growth at infinity but does not satisfy the Ambrosetti-Rabinowitz condition.
By analysing the monotonicity of the ground state energy with respect to the prescribed mass $c$, we employ the constrained minimization approach and concentration-compactness principle to establish the existence of normalized ground state solutions for all $c>0.$
%
\end{abstract}

\section{Introduction and main results}

\renewcommand\theequation{1.\arabic{equation}}
In this paper, we deal with the following nonlinear Schr\"{o}dinger equation
\begin{equation}\label{eq01}
-\Delta u=f(u)+ \lambda u\quad  \mbox{in}\  \mathbb{R}^{N}
\end{equation}
under the constraint
\begin{equation}\label{eq02}
\int_{\mathbb{R}^N}|u|^2dx=c,
\end{equation}
where $N\ge3$, $c>0$, $f\in C^1(\mathbb{R}, \mathbb{R})$ and $\lambda\in \mathbb{R}$ is a Lagrange multiplier.
A function $u\in H^1(\mathbb{R}^N)$ satisfying (\ref{eq01})-(\ref{eq02}) is usually referred as a normalized solution of (\ref{eq01}). The study of normalized solutions for (\ref{eq01}) is motivated by the search of
standing waves solutions with form $\Psi(t,x)=e^{-i\lambda t}u(x)$ of prescribed mass $c$ for the following time-dependent Schr\"odinger equation
\begin{equation*}
i\Psi_t(t,x)+\Delta_{x}\Psi(t,x)+g\left(|\Psi(t,x)|^2\right)\Psi(t,x)=0, \quad  (t,x)\in \mathbb{R}\times \mathbb{R}^{N}.
\end{equation*}

Consider the associated energy functional $J: H^1(\mathbb{R}^N)\to \mathbb{R}$ given by
\begin{displaymath}
J(u):=\frac{1}{2}\int_{\mathbb{R}^{N}}|\nabla u|^{2}dx-\int_{\mathbb{R}^N}F(u)dx,
\end{displaymath}
where $F(u):=\int_{0}^{u}f(s)ds$.
Set $\mathcal{S}_{c}:=\Big\{u\in H^{1}(\mathbb{R}^{N}): \int_{\mathbb{R}^N}|u|^2dx=c\Big\}$.
Clearly, $J$ is of $C^1$ under suitable conditions on $f$, and any critical point of $J$ on $\mathcal{S}_c$ corresponds to a normalized solution of (\ref{eq01}). Furthermore, each normalized solution of (\ref{eq01}) stays in the following Nehari-Pohozaev type set
\begin{equation*}
\mathcal{M}:=\Big\{u\in H^1(\mathbb{R}^N)\setminus\{0\}: \mathcal{P}(u)=0\Big\}.
\end{equation*}
Here $\mathcal{P}: H^1(\mathbb{R}^N)\to \mathbb{R}$ is the Nehari-Pohozaev functional defined by
\begin{equation*}
 \mathcal{P}(u):=\int_{\mathbb{R}^N}|\nabla u|^2dx-\frac{N}{2}\int_{\mathbb{R}^N}H(u)dx, \quad u\in H^1(\mathbb{R}^N),
\end{equation*}
where $H(u):=f(u)u-2F(u)$.

If $f$ admits a $L^2$ subcritical growth at infinity, i.e., $f(s)$ has a growth $|s|^{p-1}$ with $p<2+\frac{4}{N}$ as $|s|\to+\infty$, then $J|_{\mathcal{S}_{c}}$ is bounded below and one can use the minimization method to get a global minimizer, see for example \cite{Caze82,Shib2014}.

If $f$ admits a $L^2$ supercritical growth at infinity, i.e., $p>2+\frac{4}{N}$, then $J|_{\mathcal{S}_{c}}$ is unbounded below and the direct minimization does not work. The first breakthrough in the case of $L^2$ supercritical was made by Jeanjean \cite{Jean97}, where a mountain-pass type argument for the scaled functional $\tilde{J}(u,t):=J(t\star u)$ with $t\star u(\cdot):= t^{\frac{N}{2}}u(t\cdot)$ was introduced. Subsequently, Bartsch and de Valerioda \cite{Bart2013} applied the genus theory to obtain infinitely many normalized solutions of (\ref{eq01}). Ikoma and Tanaka \cite{Ikom2019} established a deformation result on $\mathcal{S}_c$, and gave an alternative proof of the results in \cite{Bart2013,Jean97}.
Bartsch and Soave \cite{Bart2017,BS2017-2} demonstrated that the set $\mathcal{S}_{c}\cap\mathcal{M}$ is a $C^1$ manifold and constitutes a natural constraint. They developed a minimax approach on $\mathcal{S}_{c}\cap\mathcal{M}$ based on the $\sigma$-homotopy stable family of compact subsets of $\mathcal{S}_{c}\cap\mathcal{M}$ to investigate the existence and multiplicity of normalized solutions of (\ref{eq01}). In these studies, the following Ambrosetti-Rabinowitz (AR) condition:
\begin{itemize}
\item [$(A)$] ~~~~~~\mbox{there exists}~~$\alpha>2+\frac{4}{N}$~~\mbox{such that}~~$f(s)s\ge \alpha F(s)>0, \quad  s\neq0$
\end{itemize}
plays an essential role in obtaining the bounded constrained Palais-Smale sequences of $J$.

Jeanjean and Lu \cite{Jean(1)2020} investigated the normalized solutions of (\ref{eq01}) when $f$ satisfies a certain monotonicity condition (see $(f_3)$ below), but not the (AR) condition. For any $c>0$, define
$$
m(c):=\inf\limits_{u\in\mathcal{S}_{c}\cap\mathcal{M}}J(u).
$$
By analysing the variation of ground state energy $m(c)$ with respect to the prescribed mass $c$, they modified the minimax arguments \cite{Bart2017} to establish the existence of normalized ground state solutions.


 Bieganowski and Mederski \cite{Bieg2021} introduced a constrained minimization method for the $L^2$ supercritical problem without imposing the (AR) condition. In order to establish the existence of normalized ground state solutions, they first pursued the existence of minimizers for $J$ on $\mathcal{D}_{c}\cap \mathcal{M}$, where
\begin{eqnarray*}
\mathcal{D}_{c}:=\left\{u\in H^{1}(\mathbb{R}^{N})\setminus\{0\}: \int_{\mathbb{R}^{N}}|u|^{2}dx\leq c\right\}.
\end{eqnarray*}
The proof was then performed through an analysis of Lagrange multipliers for the constraints $\mathcal{S}_c$ and $\mathcal{M}$, respectively.

Both of \cite{Jean(1)2020} and \cite{Bieg2021} were carried out under the assumption that the nonlinearity $f$ satisfies $L^2$-supercritical but Sobolev subcritical growth at infinity. In the case where $f$ admits Sobolev critical growth, Soave \cite{Soave(2)2020} initially explored the following Sobolev critical Schr\"{o}dinger equation
\begin{equation}\label{1.4}
-\Delta u=\lambda u+\mu|u|^{q-2}u+|u|^{2^{*}-2}u \qquad \mbox{in} ~  \mathbb{R}^{N}, ~ N\geq 3,
\end{equation}
where $2<q<2^{*}:=\frac{2N}{N-2}$ and the condition
$\mu c^{(1-\gamma_{q})q}<\alpha(N, q)$ is imposed. Here $\gamma_q=\frac{N(q-2)}{2q}$, $\alpha(N, q)=+\infty$ if $N=3,4$ and $\alpha(N, q)$ is finite for $N\ge5$. Soave \cite{Soave(2)2020} employed the Ekeland variational principle and Schwarz rearrangement techniques to establish the existence of normalized ground state solutions, acting as local minimizers, specifically for the range $2<q<2+\frac{4}{N}$. In scenarios where $q$ exhibits $L^2$-critical and $L^2$-supercritical growth, i.e., $2+\frac{4}{N}\le q<2^*$, the Ghoussoub minimax principle was utilized to demonstrate the existence of mountain pass type normalized ground state solutions.
Subsequently,
under the perturbation term $\mu|u|^{q-2}u$, exhibiting $L^2$-subcritical, $L^2$-critical, and $L^2$-supercritical characteristics respectively, significant progress has been made towards a complete understanding of (\ref{1.4}). Interested readers are referred to \cite{Alves2022, Jean(2)2020, Jean2021, Li2021, Mede2022, Wei2021}, and the references therein for further details.

In this paper, we develop a constraint minimization approach to study the existence of normalized ground state solutions of (\ref{eq01}) when $f$ satisfies a general Sobolev critical growth. Using a monotonicity condition as in \cite{Jean(1)2020} but not the (AR) condition, we shall establish the existence of normalized ground state solutions of (\ref{eq01}) for all $c>0$.

Our assumptions are formulated as follows:
 \begin{itemize}
\item [($f_1$)]~$ f\in C^{1}(\mathbb{R}, \mathbb{R})$ and $\lim\limits_{|s|\rightarrow 0}\frac{F(s)}{|s|^{2+\frac{4}{N}}}=0;$
\item [($f_2$)]~there exists $0<\eta<+\infty$ such that $\eta:=\lim\limits_{|s|\rightarrow \infty}\frac{F(s)}{|s|^{2^{*}}};$
\item [($f_3$)]~$\frac{H(s)}{|s|^{2+\frac{4}{N}}}$ is strictly decreasing on $(-\infty,0)$ and strictly increasing on $(0,+\infty)$;
\item [($f_4$)]~there exist constants $p>2+\frac{4}{N}$ and $\mu>0$ such that
$$sgn(s)f(s)\geq \mu |s|^{p-1}, \textcolor{red}{\quad s\in\mathbb{R}\setminus\{0\},}$$
where $ sgn: \mathbb{R}\setminus \{0\}\to \mathbb{R}$ is defined by
    \begin{displaymath}
    sgn(s)=
    \left\{
    \begin{aligned}
    &1, &s>0,\\
    &-1, &s<0;
    \end{aligned}
    \right.
    \end{displaymath}
\item [($f_5$)]~$f(s)s< 2^{*}F(s), \quad \forall s\in \mathbb{R}\setminus \{0\}.$
\end{itemize}
The main result of this paper is stated as follows.
\renewcommand\thetheorem{1.\arabic{theorem}}
\begin{theorem}\label{Thm1}
Assume that $(f_1)$-$(f_5)$ hold. Then, for any $c>0$, there exists $\mu_{0}>0$ such that problem (\ref{eq01}) admits a normalized ground state solution pair $(u, \lambda)\in H^1(\mathbb{R}^N)\times \mathbb{R}$ with $\lambda<0$ for all $\mu>\mu_{0}$.
\end{theorem}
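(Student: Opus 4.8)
The plan is to realize the normalized ground state as a minimizer of $J$ on the Nehari–Pohozaev constrained set $\mathcal{S}_c\cap\mathcal{M}$, following the scheme of \cite{Bieg2021} but adapting it to the Sobolev critical setting via the smallness of the ground state energy forced by $(f_4)$. First I would record the fibering facts: for $u\in\mathcal{S}_c\setminus\{0\}$ the map $t\mapsto J(t\star u)$ has, thanks to $(f_3)$, a unique critical point $t_u$ which is a strict global maximum, and $t_u\star u\in\mathcal{M}$; moreover $\mathcal{S}_c\cap\mathcal{M}$ is a smooth manifold of codimension $2$ on which $\mathcal{P}'$ does not vanish, so $J|_{\mathcal{S}_c\cap\mathcal{M}}$ is a natural constraint (any critical point there solves \eqref{eq01} for some $\lambda$). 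Using $(f_1)$, $(f_2)$, $(f_3)$ and the identity $\mathcal{P}(u)=0$ one shows $J(u)\ge \frac{2}{N}\|\nabla u\|_2^2$ on $\mathcal{M}$, so $m(c):=\inf_{\mathcal{S}_c\cap\mathcal{M}}J>0$ and minimizing sequences are bounded in $H^1$.

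The crucial quantitative step is to bound $m(c)$ strictly below the threshold $\frac{1}{N}\eta^{-\frac{N-2}{2}}\mathcal{S}^{N/2}$ (with $\mathcal{S}$ the Sobolev constant) that governs compactness in the critical regime. Here is where $(f_4)$ enters: testing $m(c)$ with a fixed profile $\varphi\in\mathcal{S}_c$ and using $sgn(s)f(s)\ge\mu|s|^{p-1}$ with $p>2+\frac4N$, one gets along the fiber $t\mapsto t\star\varphi$ an estimate $m(c)\le \max_{t>0}\big(\tfrac{t^2}{2}\|\nabla\varphi\|_2^2-\tfrac{\mu}{?}t^{\gamma_p p}\|\varphi\|_p^p+\dots\big)$; since the $L^2$-supercritical term dominates and carries the large coefficient $\mu$, the maximum over $t$ tends to $0$ as $\mu\to\infty$. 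Thus there exists $\mu_0=\mu_0(c)>0$ with $m(c)<\frac{1}{N}\eta^{-\frac{N-2}{2}}\mathcal{S}^{N/2}$ for all $\mu>\mu_0$. I expect this to be the main obstacle: making the fiber estimate genuinely uniform and below the sharp critical level requires careful bookkeeping of the critical term $\eta\|u\|_{2^*}^{2^*}$ in $F$ (which $(f_2)$ only controls asymptotically), so one must split $F$ into its critical part plus a remainder controlled by $(f_1)$–$(f_2)$, and track how the remainder behaves under the scaling $t\star\varphi$.

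With $m(c)$ below the critical threshold, I would run the concentration–compactness argument of Lions on a minimizing sequence $(u_n)\subset\mathcal{S}_c\cap\mathcal{M}$. Boundedness gives a weak limit; vanishing is excluded because $(f_4)$ forces $\int|u_n|^p$ to stay bounded away from $0$ on $\mathcal{M}$ (otherwise $m(c)=0$), and dichotomy is excluded by a subadditivity inequality for $c\mapsto m(c)$ combined with the strict subcriticality $m(c)<\frac1N\eta^{-\frac{N-2}{2}}\mathcal{S}^{N/2}$, which prevents loss of mass to a "Sobolev bubble" at infinity. Hence, up to translations, $u_n\to u$ strongly in $L^2$, then in $H^1$ by the Brezis–Lieb lemma and the fact that $J(u)\le m(c)$ with $u\in\mathcal{S}_c$ forces $u\in\mathcal{M}$ and $J(u)=m(c)$. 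Finally, a Lagrange-multiplier computation using $\mathcal{P}(u)=0$ together with $(f_5)$ (which yields $\int H(u)>0$ in a suitable sense, equivalently $\lambda\int u^2<0$) gives $\lambda<0$; a standard argument then upgrades $u$ to a ground state solution pair $(u,\lambda)$. The only remaining point is to verify the monotonicity/subadditivity of $m(\cdot)$ needed to kill dichotomy, which follows from the scaling $t\star u$ and the strict monotonicity in $(f_3)$ exactly as in \cite{Jean(1)2020,Bieg2021}.
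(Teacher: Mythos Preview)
Your outline has a genuine gap at the boundedness step. The claim that $(f_1)$--$(f_3)$ together with $\mathcal{P}(u)=0$ yield $J(u)\ge \tfrac{2}{N}\|\nabla u\|_2^2$ on $\mathcal{M}$ is false under the present hypotheses. Condition $(f_3)$ only gives $f(s)s\ge\big(2+\tfrac{4}{N}\big)F(s)$, which on $\mathcal{M}$ leads to
\[
J(u)=J(u)-\tfrac12\mathcal{P}(u)=\tfrac{N}{4}\!\int\!\Big(f(u)u-\big(2+\tfrac4N\big)F(u)\Big)\,dx\ge 0,
\]
and nothing more; a coercive lower bound of the form $J(u)\ge \kappa\|\nabla u\|_2^2$ with $\kappa>0$ would require exactly the (AR) inequality that the paper explicitly avoids. (In fact $(f_5)$ forces the \emph{opposite} bound $J(u)<\tfrac1N\|\nabla u\|_2^2$ on $\mathcal{M}$.) Consequently your argument does not give boundedness of minimizing sequences. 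In the paper this is handled by an indirect rescaling argument: if $\|\nabla u_n\|_2\to\infty$ one sets $w_n=s_n^{-1}\star u_n$ with $s_n=\|\nabla u_n\|_2$, and then both the vanishing and non-vanishing alternatives for $\{w_n\}$ lead to contradictions --- the vanishing alternative being ruled out precisely by the energy threshold estimate that needs $\mu$ large. So the role of $\mu_0$ is not only to push $m(c)$ below the Sobolev level, but already to obtain boundedness.

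A second, softer issue concerns how you pass from the weak limit to a minimizer. You invoke a subadditivity inequality for $c\mapsto m(c)$ to kill dichotomy, but no such inequality is established here (and the paper remarks that the profile-decomposition route of \cite{Bieg2021,Mede2022} does not apply directly). The paper instead proves that $c\mapsto m(c)$ is nonincreasing and strictly decreasing at any mass where the infimum is attained, performs a Brezis--Lieb splitting of both $J$ and $\mathcal{P}$ (which requires writing $F=\eta|s|^{2^*}+F_1$ and treating the subcritical remainder $F_1$ separately), shows $\mathcal{P}(u_0)\le 0$ for the weak limit, and then compares $m(c)$ with $m(c_0)$ via the fiber map to conclude $c_0=c$. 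Finally, rather than asserting that $\mathcal{S}_c\cap\mathcal{M}$ is a $C^1$ manifold with $\mathcal{P}'\neq 0$, the paper passes through the auxiliary functional $E(u)=J(t_u\star u)$ on $\mathcal{S}_c$ to deduce that the minimizer is a constrained critical point. Your overall strategy is in the right spirit, but the boundedness step must be redone without coercivity, and the compactness step needs the monotonicity/Brezis--Lieb machinery rather than an unproved subadditivity.
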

\begin{remark}
Condition $(f_4)$ was utilized to investigate normalized ground state solutions for problem (\ref{eq01}) with $N=2$ when the nonlinearity $f$ displays exponential critical growth at infinity \cite{Alves2022,Chang23}. By imposing more specific assumptions and performing a detailed energy estimate, it becomes possible to eliminate the requirement that $\mu$ is large. Additional insights into this can be found in \cite{Chen23}.
\end{remark}
\begin{remark}
 Following the strategy \cite{Bieg2021}, Mederski and Schino \cite{Mede2022} obtained the existence of ground state solutions of problem (\ref{eq01}) when $c>c^*$ for some $c_*>0$, where the nonlinearity is of the following form
\begin{eqnarray*}\label{2-19-1}
f(u)=g(u)+|u|^{2^*-2}u.
\end{eqnarray*}
The lower power nonlinearity $g$ was required to satisfy several assumptions distinct from $(f_3)$ but not the (AR) type condition. The approach \cite{Mede2022} addressing the lack of compactness due to critical growth relies on a profile decomposition theorem established in \cite{Mede2020}.
However, in our situation, we cannot directly apply the profile decomposition theorem as done in \cite{Bieg2021,Mede2022}.
\end{remark}


To establish Theorem \ref{Thm1}, we employ a direct minimization argument for $J$ on $\mathcal{S}_c\cap \mathcal{M}$. Our work space is $H^1(\mathbb{R}^N)$, necessitating the addressing of challenges arising from the absence of a compact embedding $H^1(\mathbb{R}^N)\subseteq L^r(\mathbb{R}^N)$ for $r\in [2,2^*]$.
Our strategy for restoring compactness involves a detailed examination of the monotonicity of the ground state energy $m(c)$ concerning the mass parameter $c$, as elaborated in \cite{Jean(1)2020, Yang2020}. This analysis is coupled with a Brezis-Lieb type result for Sobolev critical nonlinearity and an inequality relationship between the energy functional $J$ and the Nehari-Pohozaev functional $\mathcal{P}$. The presence of Sobolev critical growth in the nonlinearity adds intricacy to the problem.

Here is a brief outline for the proof of Theorem \ref{Thm1}: Start with a minimizing sequence ${u_n}\subset \mathcal{S}_c\cap \mathcal{M}$ of $J$ at $m(c)$. Using conditions $(f_3)-(f_4)$, we apply concentration compactness arguments (refer to \cite{Lions84}) and energy estimates to establish the boundedness of $\{u_n\}$. Subsequently, assuming $u_n\rightharpoonup u_0$ in $H^{1}(\mathbb{R}^{N})$,
by the Lions lemma, we deduce $\|u_0\|_{2}^{2}:=c_0\in(0,c].$
Then, we can skillfully utilize a Brezis-Lieb splitting argument and $(f_3)$ to prove that $\mathcal{P}(u_0)\leq 0$, which implies that there exists $t_0\in(0,1]$ such that $t_0\star u_0\in \mathcal{S}_{c_0}\cap \mathcal{M},$
\begin{equation*}
J(u_0)-J(t_0\star u_0)\geq \frac{1-t_0^{2}}{2}\mathcal{P}(u_0).
\end{equation*}
Hence by delicate integral estimates, we have
\begin{eqnarray*}
m(c) \geq J(u_0)-\frac{1}{2}\mathcal{P}(u_0)
\geq J(t_0\star u_0)-\frac{t_0^{2}}{2}\mathcal{P}(u_0)\geq J(t_0\star u_0)\geq m(c_0).
\end{eqnarray*}
On the other hand, using the monotonicity of $m(c)$ with respect to $c>0$, we infer that $m(c)\leq m(c_0),$ which implies that $\mathcal{P}(u_0)=0$ and thus $m(c)=m(c_0).$ Hence, we get $J(u_0)=m(c_0).$ Since the function $c\mapsto m(c)$ is strictly decreasing at $c_0$, we derive $c_0=c$ and then $u_0\in\mathcal{S}_{c}\cap \mathcal{M}.$  Thereafter, by showing that $\mathcal{M}$ is a natural constraint, we prove that the minimizer $u_0$ of $J$ on $\mathcal{S}_c\cap\mathcal{M}$ is a normalized ground state solution of (\ref{eq01}).

The paper is organized as follows. In Section $2,$ we introduce some preliminary results. In Section $3$, the monotonicity of $m(c)$ with respect to $c>0$ will be analyzed. In Section $4$, we give the proof of Theorem 1.1.

Regarding the notations, for $p\geq 1,$ the (standard) $L^{p}-$ norm of $u\in L^{p}(\mathbb{R}^{N})$ is denoted by $\|u\|_{p}$ and $\|\cdot\|$ denotes the norm in $H^{1}(\mathbb{R}^{N}).$

\section{Preliminaries}\label{Preli}
\setcounter{equation}{0}
\renewcommand\theequation{2.\arabic{equation}}
In this section, we prove some preliminary results. We recall that, for $N\ge2$ and $q\in (2, 2^*)$, there exists $C_{N,q}>0$ depending on $N$ and $q$ such that the following Gagliardo-Nirenberg inequality holds:
\begin{equation}\label{2.14}
\|u\|_{q}\leq C_{N,q}\|u\|^{1-\gamma_{q}}_{2}\|\nabla u\|^{\gamma_{q}}_{2}, \quad \forall u\in H^{1}(\mathbb{R}^{N}),
\end{equation}
where $\gamma_{q}:=N(\frac{1}{2}-\frac{1}{q}).$ See \cite{W1983}. If $N\ge3$, by \cite{W1996}, there exists an optimal constant $S>0$ depending only on $N$ such that
\begin{equation}\label{2.15}
S\|u\|^{2}_{2^{*}}\leq\|\nabla u\|^{2}_{2}, \quad \forall u\in H^{1}(\mathbb{R}^{N}).
\end{equation}

\renewcommand\thetheorem{2.\arabic{theorem}}
\begin{lemma}\label{lem2} Assume that $(f_{1})-(f_{2})$ and $(f_{4})-(f_{5})$ hold. Then
$\inf\limits_{\mathcal{S}_{c}\cap\mathcal{M}}\|\nabla u\|_{2}>0.$
\end{lemma}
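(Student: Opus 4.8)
The plan is to prove a slightly more precise statement: there is a constant $\delta_0=\delta_0(N,c)>0$ such that $\|\nabla u\|_2\ge\delta_0$ for \emph{every} $u\in\mathcal{S}_c\cap\mathcal{M}$; taking the infimum then gives the lemma, and no contradiction argument is needed. Since $u\in\mathcal{M}$ means $\mathcal{P}(u)=0$, we have
\[
\|\nabla u\|_2^{2}=\frac{N}{2}\int_{\mathbb{R}^N}H(u)\,dx=\frac{N}{2}\int_{\mathbb{R}^N}\bigl(f(u)u-2F(u)\bigr)\,dx ,
\]
so everything reduces to an upper bound for $\int_{\mathbb{R}^N}H(u)\,dx$. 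By $(f_5)$ we have $f(s)s<2^{*}F(s)$, hence $H(s)\le(2^{*}-2)F(s)$ for all $s\in\mathbb{R}$; moreover $(f_4)$ gives $F(s)>0$ for $s\neq0$, although only an upper bound on $F$ is used below. Using $(f_1)$ near the origin, $(f_2)$ at infinity, and the continuity of $F$ on bounded sets, for each $\varepsilon>0$ there is $C_\varepsilon>0$ such that $F(s)\le\varepsilon|s|^{2+\frac4N}+C_\varepsilon|s|^{2^{*}}$ for all $s$, and therefore $H(s)\le(2^{*}-2)\bigl(\varepsilon|s|^{2+\frac4N}+C_\varepsilon|s|^{2^{*}}\bigr)$.

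Integrating this pointwise bound and estimating the two terms separately, for the first term I use the Gagliardo--Nirenberg inequality (\ref{2.14}) with the $L^2$-critical exponent $q=2+\frac4N$; a direct computation gives $\gamma_q=\frac{N}{N+2}$, so $q\gamma_q=2$ and $(1-\gamma_q)q=\frac4N$, whence, using $\|u\|_2^2=c$,
\[
\int_{\mathbb{R}^N}|u|^{2+\frac4N}\,dx\le C_{N,q}^{q}\,c^{\frac2N}\,\|\nabla u\|_2^{2}.
\]
For the second term, (\ref{2.15}) gives $\int_{\mathbb{R}^N}|u|^{2^{*}}\,dx\le S^{-2^{*}/2}\|\nabla u\|_2^{2^{*}}$. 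Plugging these back into the identity above yields
\[
\|\nabla u\|_2^{2}\le\frac{N(2^{*}-2)}{2}\Bigl(\varepsilon\,C_{N,q}^{q}\,c^{\frac2N}\,\|\nabla u\|_2^{2}+C_\varepsilon\,S^{-\frac{2^{*}}{2}}\,\|\nabla u\|_2^{2^{*}}\Bigr).
\]

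Now I fix $\varepsilon>0$, depending only on $N$ and $c$, small enough that $\tfrac{N(2^{*}-2)}{2}\varepsilon C_{N,q}^{q}c^{2/N}\le\tfrac12$; this absorbs the first term on the right into the left-hand side and leaves $\tfrac12\|\nabla u\|_2^{2}\le\tfrac{N(2^{*}-2)}{2}C_\varepsilon S^{-2^{*}/2}\|\nabla u\|_2^{2^{*}}$. Since any $u\in\mathcal{S}_c$ has $\|\nabla u\|_2>0$ (a constant function in $L^2(\mathbb{R}^N)$ must vanish, whereas $\|u\|_2^2=c>0$), dividing by $\|\nabla u\|_2^{2}$ gives $\|\nabla u\|_2^{2^{*}-2}\ge\bigl(N(2^{*}-2)C_\varepsilon S^{-2^{*}/2}\bigr)^{-1}=:\delta_0^{\,2^{*}-2}>0$, the required uniform lower bound.

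The only genuinely delicate point is establishing the \emph{uniform in $s$} pointwise bound $H(s)\le(2^{*}-2)(\varepsilon|s|^{2+4/N}+C_\varepsilon|s|^{2^{*}})$ by gluing together the local behaviour from $(f_1)$, the growth at infinity from $(f_2)$, and the one-sided inequality $(f_5)$; after that the argument is routine bookkeeping, the decisive arithmetic fact being that $q=2+\frac4N$ is precisely the exponent for which $q\gamma_q=2$, so that the Gagliardo--Nirenberg term comes with the factor $\|\nabla u\|_2^{2}$ and a coefficient depending only on $c$, and can be absorbed on the left.
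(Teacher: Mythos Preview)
Your proof is correct and follows the same overall strategy as the paper's: start from the Pohozaev identity $\|\nabla u\|_2^2=\frac{N}{2}\int H(u)$, bound $H$ by $(2^*-2)F$ via $(f_5)$, then control $\int F(u)$ using the growth conditions together with Gagliardo--Nirenberg and Sobolev. The execution differs in two minor but pleasant ways. First, the paper argues by contradiction (assuming $\|\nabla u_n\|_2\to 0$), whereas you obtain a direct, explicit lower bound $\delta_0(N,c)$. Second, the paper estimates $F(s)\le C_{\epsilon,p_1}|s|^{p_1}+(\eta+\epsilon)|s|^{2^*}$ with an intermediate exponent $p_1\in(2+\tfrac4N,2^*)$, so that the Gagliardo--Nirenberg term carries a power $\|\nabla u\|_2^{\gamma_{p_1}p_1}$ strictly above $2$ and becomes negligible when $\|\nabla u_n\|_2\to 0$; you instead use the $L^2$-critical exponent $q=2+\tfrac4N$, for which $q\gamma_q=2$ exactly, and absorb that term on the left by choosing $\varepsilon$ small. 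Both choices work; yours avoids the contradiction and makes the dependence of the bound on $N$ and $c$ transparent.
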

\begin{proof}
We assume by contradiction that there exists $\{u_{n}\}\subset \mathcal{S}_{c}\cap \mathcal{M}$ such that
 \begin{equation}\label{8-24-1}
 \|\nabla u_{n}\|_{2}\rightarrow 0.
 \end{equation}
 By $(f_{1})-(f_{2})$ and $(f_{4})$, for any $\epsilon>0,$ there exist $p<p_1<2^{*}$ and $C_{\epsilon,p_1}>0$ such that
\begin{equation}\label{2.0}
F(s)\leq C_{\epsilon,p_1}|s|^{p_1}+(\eta+\epsilon)|s|^{2^{*}}, \quad \forall s\in \mathbb{R}.
\end{equation}
Then, by (\ref{2.14})-(\ref{2.0}), $(f_{5})$ and $\gamma_{p_{1}}p_{1}>2,$ we get
\begin{eqnarray*}
 \int_{\mathbb{R}^{N}}f(u_{n})u_{n}dx
 &\leq&  2^{*}\int_{\mathbb{R}^{N}}F(u_{n})dx  \nonumber\\
&\leq& 2^{*}C_{\epsilon,p_1}C_{N, p_{1}}^{p_{1}}c^{\frac{1-\gamma_{p_{1}}}{2}p_{1}}\|\nabla u_{n}\|_{2}^{\gamma_{p_{1}}p_{1}-2}\int_{\mathbb{R}^{N}}|\nabla u_{n}|^{2}dx\nonumber\\
&~~~~~~~~~~~~~~~~~~~~~&+2^{*}(\eta+\epsilon) S^{-\frac{2^{*}}{2}}\|\nabla u_{n}\|_{2}^{2^{*}-2}\int_{\mathbb{R}^{N}}|\nabla u_{n}|^{2}dx \nonumber\\
 &\leq& \frac{1}{4}\int_{\mathbb{R}^{N}}|\nabla u_{n}|^{2}dx
\end{eqnarray*}
for $n$ large enough.
Hence, using $(f_{5})$ again,
\begin{eqnarray*}
\int_{\mathbb{R}^{N}}H(u_{n})dx
&=&\int_{\mathbb{R}^{N}}\big[f(u_{n})u_{n}-2F(u_{n})\big]dx\nonumber\\
&\leq& \frac{2}{N}\int_{\mathbb{R}^{N}}f(u_{n})u_{n}
\leq \frac{1}{N}\int_{\mathbb{R}^{N}}|\nabla u_{n}|^{2}dx,
\end{eqnarray*}
which together with $\{u_{n}\}\subset \mathcal{M}$ implies that
\begin{displaymath}
0=\mathcal{P}(u_{n})=\int_{\mathbb{R}^{N}}|\nabla u_{n}|^{2}dx-\frac{N}{2}\int_{\mathbb{R}^{N}}H(u_{n})dx\geq\frac{1}{2}\int_{\mathbb{R}^{N}}|\nabla u_{n}|^{2}dx.
\end{displaymath}
This is a contradiction with $\{u_{n}\}\subset \mathcal{S}_{c}$. Therefore, $\inf\limits_{\mathcal{S}_{c}\cap\mathcal{M}}\|\nabla u\|_{2}>0.$
\end{proof}

For any $u\in H^1(\mathbb{R}^N)\setminus \{0\}$ and $t>0$, set
\begin{equation*}
(t\star u)(x):=t^{\frac{N}{2}}u(tx),~~~\forall x\in \mathbb{R}^N.
\end{equation*}
Clearly, $t\star u\in \mathcal{S}_c$ if $u\in \mathcal{S}_c$.
\begin{lemma}\label{lem1} Suppose that $(f_{1})-(f_{4})$ hold. Then, for any $u\in H^1(\mathbb{R}^N)\setminus \{0\}$, there exists a unique number $t_{u}>0$ such that $t_{u}\star u\in \mathcal{M}.$ Moreover, $J(t_{u}\star u)>J(t\star u)$ for all $t>0$ with $t\neq t_u.$
\end{lemma}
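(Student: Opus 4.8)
The plan is to study the fibering map $\psi_u(t) := J(t\star u)$ for fixed $u \in H^1(\mathbb{R}^N)\setminus\{0\}$ and show it has a unique critical point which is a strict global maximum. First I would compute, using the scaling $t\star u(x) = t^{N/2}u(tx)$, that
\begin{equation*}
\psi_u(t) = \frac{t^2}{2}\int_{\mathbb{R}^N}|\nabla u|^2\,dx - \frac{1}{t^N}\int_{\mathbb{R}^N}F\big(t^{N/2}u\big)\,dx,
\end{equation*}
and then differentiate to obtain
\begin{equation*}
\psi_u'(t) = t\int_{\mathbb{R}^N}|\nabla u|^2\,dx - \frac{N}{2t}\int_{\mathbb{R}^N}H\big(t^{N/2}u\big)\,dx = \frac{1}{t}\,\mathcal{P}(t\star u),
\end{equation*}
where $H(s) = f(s)s - 2F(s)$, using the change of variables $y = tx$ in the Pohozaev-type term. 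This identity shows that $t\star u \in \mathcal{M}$ if and only if $\psi_u'(t) = 0$, so the lemma reduces to proving $\psi_u$ has exactly one positive critical point and that it is a strict maximum.

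Next I would rewrite the critical point equation $\psi_u'(t)=0$ (for $t>0$) as
\begin{equation*}
\int_{\mathbb{R}^N}|\nabla u|^2\,dx = \frac{N}{2}\int_{\mathbb{R}^N}\frac{H(t^{N/2}u)}{|t^{N/2}u|^{2+\frac{4}{N}}}\,|u|^{2+\frac{4}{N}}\,t^{2}\,dx,
\end{equation*}
i.e. $\int|\nabla u|^2 = g(t)$ where, after dividing by $t^2$,
\begin{equation*}
g(t) := \frac{N}{2}\int_{\mathbb{R}^N}\frac{H(t^{N/2}u(x))}{|t^{N/2}u(x)|^{2+\frac{4}{N}}}\,|u(x)|^{2+\frac{4}{N}}\,dx.
\end{equation*}
By $(f_3)$ the map $s \mapsto H(s)/|s|^{2+4/N}$ is strictly increasing on $(0,\infty)$ and strictly decreasing on $(-\infty,0)$, hence $s\mapsto H(s)/|s|^{2+4/N}$ is increasing in $|s|$; since $t \mapsto |t^{N/2}u(x)|$ is strictly increasing, the integrand of $g$ is strictly increasing in $t$ at every $x$ with $u(x)\neq 0$, so $g$ is strictly increasing on $(0,\infty)$. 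For the limits I would use $(f_1)$ to get $H(s)/|s|^{2+4/N}\to 0$ as $s\to 0$ (note $(f_1)$ controls $F$, and $H = fu - 2F$; one gets the same vanishing, possibly shrinking via L'Hôpital or the $C^1$ hypothesis), giving $g(t)\to 0$ as $t\to 0^+$; and $(f_4)$, which forces $F(s)$ (hence $H(s)$, via $(f_3)$ positivity of $H$ and the lower bound $f(s)s \geq \mu|s|^p$) to grow at least like $|s|^p$ with $p > 2+4/N$, so that $H(s)/|s|^{2+4/N}\to +\infty$ as $|s|\to\infty$, giving $g(t)\to +\infty$ as $t\to\infty$. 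Therefore $g$ is a continuous strictly increasing bijection of $(0,\infty)$ onto $(0,\infty)$, and the equation $g(t_u) = \int|\nabla u|^2 > 0$ has a unique solution $t_u > 0$.

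To finish, I would observe that $\psi_u'(t) = t\big(\int|\nabla u|^2 - g(t)\big)$, so $\psi_u' > 0$ on $(0,t_u)$ and $\psi_u' < 0$ on $(t_u,\infty)$ by the strict monotonicity of $g$; hence $t_u$ is the unique critical point and $\psi_u(t_u) > \psi_u(t)$ for all $t \neq t_u$, which is exactly $J(t_u\star u) > J(t\star u)$. The only genuinely delicate point is the pair of limit computations for $g$ at $0$ and $\infty$, where one must justify passing the limit inside the integral --- near $t=0$ via a dominated-convergence bound coming from $(f_1)$ plus the estimate \eqref{2.0} (Gagliardo-Nirenberg control of $F$-type terms), and near $t=\infty$ via Fatou's lemma together with the lower growth bound from $(f_4)$; the monotonicity and uniqueness themselves are then immediate from $(f_3)$.
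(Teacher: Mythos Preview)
Your argument is correct and rests on the same ingredients as the paper's proof: the fibering identity $\psi_u'(t)=t^{-1}\mathcal{P}(t\star u)$ together with the strict monotonicity of $s\mapsto H(s)/|s|^{2+4/N}$ from $(f_3)$. The organization differs slightly. The paper first establishes \emph{existence} of a maximum by showing $J(t\star u)\to 0^{+}$ as $t\to 0^{+}$ (via the upper bound $F(s)\le C_{\epsilon,p_1}|s|^{p_1}+(\eta+\epsilon)|s|^{2^*}$) and $J(t\star u)\to -\infty$ as $t\to+\infty$ (via $(f_4)$), and then proves \emph{uniqueness} by contradiction, assuming two critical points and comparing the corresponding values of $H(t^{N/2}u)/|t^{N/2}u|^{2+4/N}$. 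You instead package both steps into the single statement that $g$ is a strictly increasing bijection of $(0,\infty)$ onto itself, which also yields the strict-maximum conclusion directly from the sign of $\psi_u'$.

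One point where your route requires slightly more than you indicate: the limit $g(t)\to+\infty$ needs $H(s)/|s|^{2+4/N}\to+\infty$, and the combination ``$H>0$ plus $f(s)s\ge\mu|s|^{p}$'' that you cite is not by itself enough (since $H=f(s)s-2F(s)$ is a difference). What closes the gap is the consequence $f(s)s\ge\bigl(2+\tfrac{4}{N}\bigr)F(s)$ of $(f_1)$--$(f_3)$ (this is exactly \eqref{29-1} in the paper), which gives $H(s)\ge\tfrac{2}{N+2}f(s)s\ge\tfrac{2\mu}{N+2}|s|^{p}$ and hence the desired blow-up. The paper avoids this extra step by bounding $J$ (hence $F$) rather than $H$ at infinity; your formulation is cleaner once that inequality is in hand.
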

\begin{proof}
Using similar argument as in Lemma \ref{lem2}, by $(f_{1})-(f_{2}), (f_4)$, for any $\epsilon>0,$ there exist $p<p_{1}<2^{*}, C_{\epsilon,p_{1}}>0$ such that
\begin{eqnarray*}
 J(t\star u)
\ge \frac{t^{2}}{2}\int_{\mathbb{R}^{N}}|\nabla u|^{2}dx-C_{\epsilon,p_1}t^{\frac{N}{2}(p_{1}-2)}\int_{\mathbb{R}^{N}}|u|^{p_{1}}dx-(\eta+\epsilon) t^{2^{*}}\int_{\mathbb{R}^{N}} |u|^{2^{*}}dx
\end{eqnarray*}
and
$$
J(t\star u)\leq \frac{t^2}{2}\int_{\mathbb{R}^{N}}|\nabla u|^{2}dx-\frac{\mu}{p}t^{\frac{N}{2}(p-2)}\int_{ \mathbb{R}^{N}}|u|^{p}dx.
$$
Then, by $p_{1}>p>2+\frac{4}{N},$ we have
\begin{align*}
&J(t\star u)\rightarrow 0^{+}\quad\quad\ \ \ \  \mbox{as}\quad t\rightarrow 0^{+},\\
&J(t\star u)\rightarrow-\infty \quad\quad\ \  \mbox{as}\quad t\rightarrow +\infty,
\end{align*}
which imply that $\Phi_u(t):=J(t\star u)$ admits a global maximum point $t_{u}>0$ and thus
$\frac{d}{dt}\Phi_u(t)|_{t=t_{u}}=0.$
Since
\begin{equation*}
\frac{d}{dt}J(t\star u)=t^{-1}\mathcal{P}(t\star u),
\end{equation*}
we deduce $t_{u}\star u\in\mathcal{M}.$

In what follows, we prove the uniqueness. Assume by contradiction that there exist $0<\tilde{t}_{u}<t_u$ such that $\tilde{t}_{u}\star u,  t_{u}\star u\in \mathcal{M}$. Then
\begin{equation}\label{8-25-1}
t_u^2\int_{\mathbb{R}^{N}}|\nabla  u|^{2}dx-\frac{N}{2}t_u^{-N}\int_{\mathbb{R}^{N}}H(t_{u}^{\frac{N}{2}}u)dx
=0
\end{equation}
and
\begin{equation}\label{2-1}
 \tilde{t}_u^2\int_{\mathbb{R}^{N}}|\nabla u|^{2}dx-\frac{N}{2}\tilde{t}_u^{-N}\int_{\mathbb{R}^{N}}H(\tilde{t}_{u}^{\frac{N}{2}}u)dx=0.
\end{equation}
By similar arguments as in Remark 2.2 in \cite{Jean(1)2020}, we may assume that $\frac{H(s)}{|s|^{2+\frac{4}{N}}}$ is continuous on $\mathbb{R}$, $\frac{H(s)}{|s|^{2+\frac{4}{N}}}=0$ at $s=0$. Furthermore, the ratio is strictly decreasing on $(-\infty, 0]$ and strictly increasing on $[0, +\infty)$. Then by (\ref{8-25-1})-(\ref{2-1}) we get
\begin{displaymath}
\int_{\mathbb{R}^{N}}\left(\frac{H(t_{u}^{\frac{N}{2}}u)}{|t_{u}^{\frac{N}{2}}u|^{2+\frac{4}{N}}}
-\frac{H(\tilde{t}_{u}^{\frac{N}{2}}u)}{|\tilde{t}_{u}^{\frac{N}{2}}u|^{2+\frac{4}{N}}}\right)|u|^{2+\frac{4}{N}}dx=0.
\end{displaymath}
However, by $(f_{3})$ we can deduce
\begin{displaymath}
\int_{\mathbb{R}^{N}}\left(\frac{H(t_{u}^{\frac{N}{2}}u)}{|t_{u}^{\frac{N}{2}}u|^{2+\frac{4}{N}}}
-\frac{H(\tilde{t}_{u}^{\frac{N}{2}}u)}{|\tilde{t}_{u}^{\frac{N}{2}}u|^{2+\frac{4}{N}}}\right)|u|^{2+\frac{4}{N}}dx>0,
\end{displaymath}
which provides a contradiction. Hence $\tilde{t}_{u}=t_{u}$, which implies that $t_{u}\star u\in \mathcal{M}$ is the unique global maximum point of $\Phi_u$ and $J(t_{u}\star u)>J(t\star u)$ for all $t\neq t_u.$
\end{proof}
\begin{lemma}\label{lem4} Assume that $(f_{1})-(f_{5})$ hold. Then $\inf\limits_{\mathcal{S}_{c}\cap\mathcal{M}}J(u)=\inf\limits_{u\in\mathcal{S}_{c}}\max\limits_{t>0}J(t\star u)>0.$
\end{lemma}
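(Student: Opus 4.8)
The plan is to prove the two claimed equalities and the positivity separately. For the identity $\inf_{\mathcal{S}_c\cap\mathcal{M}}J(u)=\inf_{u\in\mathcal{S}_c}\max_{t>0}J(t\star u)$, I would use Lemma~\ref{lem1} as the pivot. Given any $u\in\mathcal{S}_c$, Lemma~\ref{lem1} provides a unique $t_u>0$ with $t_u\star u\in\mathcal{M}$, and since $t\mapsto\mathcal{S}_c$ is preserved under the $\star$-action, $t_u\star u\in\mathcal{S}_c\cap\mathcal{M}$; moreover $\max_{t>0}J(t\star u)=J(t_u\star u)$. Hence $\inf_{u\in\mathcal{S}_c}\max_{t>0}J(t\star u)\ge\inf_{\mathcal{S}_c\cap\mathcal{M}}J$. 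For the reverse inequality, if $v\in\mathcal{S}_c\cap\mathcal{M}$ then $t_v=1$ by uniqueness, so $J(v)=\max_{t>0}J(t\star v)\ge\inf_{u\in\mathcal{S}_c}\max_{t>0}J(t\star u)$; taking the infimum over $v$ gives equality.

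For the strict positivity $\inf_{\mathcal{S}_c\cap\mathcal{M}}J(u)>0$, I would argue directly on $v\in\mathcal{S}_c\cap\mathcal{M}$. Since $\mathcal{P}(v)=0$ means $\|\nabla v\|_2^2=\frac N2\int_{\mathbb{R}^N}H(v)\,dx$, one can rewrite $J(v)$ as $J(v)=J(v)-\tfrac1N\mathcal{P}(v)=\left(\tfrac12-\tfrac1N\right)\|\nabla v\|_2^2+\tfrac1N\int_{\mathbb{R}^N}\big(\tfrac N2 H(v)-N F(v)\big)\,dx$ — or, more cleanly, use the representation $J(v)=\tfrac12\|\nabla v\|_2^2-\int F(v)$ together with the Pohozaev constraint to isolate a term bounded below by a positive multiple of $\|\nabla v\|_2^2$ minus higher-order terms. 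The cleanest route: from $(f_5)$, $H(s)=f(s)s-2F(s)<(2^*-2)F(s)$ and also $\int F(v)\le$ (via $(f_1),(f_2),(f_4)$ and Gagliardo--Nirenberg) $C_{\epsilon,p_1}\|\nabla v\|_2^{\gamma_{p_1}p_1}+(\eta+\epsilon)S^{-2^*/2}\|\nabla v\|_2^{2^*}$ with $\gamma_{p_1}p_1>2$ and $2^*>2$. Combining with $\|\nabla v\|_2^2=\tfrac N2\int H(v)\le\tfrac N2(2^*-2)\int F(v)$ yields a lower bound $\|\nabla v\|_2^2\le C(\|\nabla v\|_2^{\gamma_{p_1}p_1}+\|\nabla v\|_2^{2^*})$, which forces $\|\nabla v\|_2\ge\delta>0$ uniformly (this is essentially Lemma~\ref{lem2}). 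Then one uses the expression for $J(v)$ in terms of $\|\nabla v\|_2^2$ and $\int H(v)$ to conclude $J(v)\ge c_1\|\nabla v\|_2^2\ge c_1\delta^2>0$.

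The one delicate point is making sure that $J(v)$ is genuinely bounded below by a positive multiple of $\|\nabla v\|_2^2$ on $\mathcal{M}$. Writing $J(v)=\tfrac12\|\nabla v\|_2^2-\int F(v)$ and noting $\int F(v)=\tfrac12\int f(v)v-\tfrac12\int H(v)$, one gets via $\mathcal{P}(v)=0$ that $\int H(v)=\tfrac2N\|\nabla v\|_2^2$, so $J(v)=\tfrac12\|\nabla v\|_2^2-\tfrac12\int f(v)v+\tfrac1N\|\nabla v\|_2^2$. Using $(f_5)$ in the form $\int f(v)v<2^*\int F(v)$ is circular, so instead I would bound $\int f(v)v$ from above directly: by $(f_1)$, $(f_2)$, and growth estimates, $\int f(v)v\le C_\epsilon'\|\nabla v\|_2^{\gamma_{p_1}p_1}+C\|\nabla v\|_2^{2^*}$, and since $\|\nabla v\|_2\ge\delta$ these terms can be absorbed — but one must be careful that the coefficients do not overwhelm the leading $(\tfrac12+\tfrac1N)\|\nabla v\|_2^2$. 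This requires redoing the estimate not at $\|\nabla v\|_2\to0$ but uniformly; the resolution is that on $\mathcal{M}$, the identity $\|\nabla v\|_2^2=\tfrac N2\int H(v)$ already encodes enough rigidity that $J(v)=\big(\tfrac1N-\text{(small)}\big)\|\nabla v\|_2^2$ after combining with $(f_5)$, hence the main obstacle is just the bookkeeping of which estimate is applied where; I would handle it by first establishing the uniform lower bound $\|\nabla v\|_2\ge\delta$ (Lemma~\ref{lem2}), then using $(f_5)$ once more to write $J(v)\ge\tfrac1N\|\nabla v\|_2^2-\tfrac12\big(\int f(v)v-\tfrac{2}{2^*}\cdot 2^*\int F(v)\big)$-type manipulation so the critical terms cancel against $\int F(v)$, leaving $J(v)\ge\big(\tfrac1N-\tfrac{1}{2^*}\big)\int f(v)v\cdot(\text{something positive})$ or more simply $J(v)\geq \left(\frac{1}{2}-\frac{1}{2^*}\right)\int_{\mathbb{R}^N}\left(\frac{2^*}{N}H(v)+\cdots\right)$; in any case the positivity follows once the gradient norm is bounded away from zero.
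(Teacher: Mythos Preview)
Your treatment of the equality $\inf_{\mathcal{S}_c\cap\mathcal{M}}J=\inf_{u\in\mathcal{S}_c}\max_{t>0}J(t\star u)$ via Lemma~\ref{lem1} is correct and matches the paper exactly.

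The positivity argument, however, has a genuine gap. You try to show $J(v)\ge c_1\|\nabla v\|_2^2$ for $v\in\mathcal{S}_c\cap\mathcal{M}$ and then invoke Lemma~\ref{lem2}. But on $\mathcal{M}$ there is no upper bound on $\|\nabla v\|_2$, and since $F$ has Sobolev-critical growth, the term $\int F(v)$ is of order $\|\nabla v\|_2^{2^*}$, which dominates $\|\nabla v\|_2^2$ for large gradient. Concretely, using $\mathcal{P}(v)=0$ and $(f_5)$ in the way you suggest yields $\|\nabla v\|_2^2<2^*\int F(v)$, i.e.\ $J(v)<\big(\tfrac12-\tfrac1{2^*}\big)\|\nabla v\|_2^2$, an \emph{upper} bound, not a lower one. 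The alternative identity $J(v)=\tfrac{N}{4}\int\big(f(v)v-(2+\tfrac4N)F(v)\big)dx\ge0$ (from $(f_3)$) gives only nonnegativity, not a uniform positive lower bound. Your own caveat that ``the coefficients [must] not overwhelm the leading $(\tfrac12+\tfrac1N)\|\nabla v\|_2^2$'' is precisely the obstruction, and it is not mere bookkeeping.

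The paper avoids this entirely by exploiting the minimax identity you already proved. For $u\in\mathcal{S}_c\cap\mathcal{M}$ one has $J(u)=\max_{t>0}J(t\star u)\ge J(t\star u)$ for \emph{every} $t>0$. Since $\|\nabla(t\star u)\|_2=t\|\nabla u\|_2$, the choice $t=\delta/\|\nabla u\|_2$ gives
\[
J(u)\ge \frac{\delta^2}{2}-C_{\epsilon,p_1}C_{N,p_1}^{p_1}c^{\frac{(1-\gamma_{p_1})p_1}{2}}\delta^{\gamma_{p_1}p_1}-(\eta+\epsilon)S^{-2^*/2}\delta^{2^*},
\]
which is independent of $u$ and strictly positive for $\delta$ small (since $\gamma_{p_1}p_1>2$ and $2^*>2$). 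The point is that the freedom in $t$ lets you move to the small-gradient regime where the quadratic term wins, sidestepping the large-$\|\nabla v\|_2$ difficulty altogether.
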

\begin{proof} By Lemma \ref{lem1}, it is easily seen that the $"="$ holds. Then, by $(f_{1})-(f_{2}),(f_{4})$ and similar arguments as in Lemma \ref{lem2}, for any $\epsilon>0,$ there exist $p<p_{1}<2^{*}, C_{\epsilon,p_{1}}>0$ such that for any $t>0$ and $u\in \mathcal{S}_{c}\cap\mathcal{M}$, we have
\begin{align*}
&J(u)\nonumber\\
&\geq \frac{t^{2}}{2}\int_{\mathbb{R}^{N}}|\nabla u|^{2}dx-\int_{\mathbb{R}^{N}}\Big(C_{\epsilon,p_1}t^{\frac{N}{2}(p_{1}-2)}|u|^{p_{1}}+(\eta+\epsilon)t^{2^{*}}|u|^{2^{*}}\Big)dx \nonumber\\
&\geq \frac{t^{2}}{2}\|\nabla u\|^{2}_{2}- \Big(C_{\epsilon,p_1}C_{N, p_{1}}^{p_{1}}t^{\frac{N(p_{1}-2)}{2}}c^{\frac{1-\gamma_{p_{1}}}{2}p_{1}}\|\nabla u\|^{\frac{N(p_{1}-2)}{2}}_{2}+(\eta+\epsilon)S^{-\frac{2^{*}}{2}}t^{2^{*}}\|\nabla u\|^{2^{*}}_{2}\Big).
\end{align*}
Hence, in view of $p_1>2+\frac{4}{N}$, taking $t=\delta/\|\nabla u\|_{2}$ with $\delta>0$ small enough, we deduce
$$
J(u)\geq \frac{\delta^{2}}{2}-\left(C_{\epsilon,p_1}C_{N, p_{1}}^{p_{1}}c^{\frac{1-\gamma_{p_{1}}}{2}p_{1}}\delta^{\gamma_{p_{1}}p_{1}}+(\eta+\epsilon) S^{-\frac{2^{*}}{2}}\delta^{2^{*}}\right)\ge \frac{\delta^{2}}{4}>0.
$$
\end{proof}
\begin{lemma}\label{lem3} Assume that $(f_{1})-(f_{4})$ hold. Then, for any $u\in H^{1}(\mathbb{R}^{N})\setminus \{0\},$ we have
\begin{description}
\item[(i)] The map $u\mapsto t_{u}$ is continuous.
\item[(ii)] $t_{u(\cdot+y)}=t_{u}$ for any $y\in\mathbb{R}^{N}.$
\end{description}
\end{lemma}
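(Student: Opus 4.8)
The plan is to treat (ii) by a direct scaling/translation-invariance computation, and (i) by a compactness-plus-uniqueness argument resting on Lemma \ref{lem1}.

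For (ii): since both $\int_{\mathbb{R}^N}|\nabla u|^2\,dx$ and $\int_{\mathbb{R}^N}F(u)\,dx$ are invariant under translations, the substitution $z=tx+y$ yields $J\big(t\star u(\cdot+y)\big)=\frac{t^2}{2}\int_{\mathbb{R}^N}|\nabla u|^2\,dx-t^{-N}\int_{\mathbb{R}^N}F\big(t^{\frac{N}{2}}u\big)\,dx=J(t\star u)$ for every $t>0$ and $y\in\mathbb{R}^N$; that is, $\Phi_{u(\cdot+y)}\equiv\Phi_u$ on $(0,\infty)$. By Lemma \ref{lem1} each of these fibering maps has a unique critical point, namely its global maximum, so they share the same maximizer and therefore $t_{u(\cdot+y)}=t_u$.

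For (i), I would argue through sequences. Let $u_n\to u$ in $H^1(\mathbb{R}^N)$ with $u\neq0$; then $u_n\neq0$ for $n$ large, $\|\nabla u_n\|_2\to\|\nabla u\|_2>0$, and $\|u_n\|_r\to\|u\|_r>0$ for every $r\in[2,2^{*}]$. Write $t_n:=t_{u_n}$. The first step is to confine $\{t_n\}$ to a compact subset of $(0,\infty)$ for $n$ large. Since $(f_4)$ gives $F\geq\frac{\mu}{p}|\cdot|^p\geq0$, one has $\Phi_{u_n}(t)\leq\frac{t^2}{2}\|\nabla u_n\|_2^2$ for all $t>0$; and since $t_n$ maximizes $\Phi_{u_n}$, $\Phi_{u_n}(t_n)\geq\Phi_{u_n}(t_u)=J(t_u\star u_n)\to J(t_u\star u)=\Phi_u(t_u)=:a$, where $a>0$ because $\Phi_u(t)\to0^{+}$ as $t\to0^{+}$ by the small-$t$ estimate in the proof of Lemma \ref{lem1}. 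Hence $\frac{M}{2}t_n^2\geq\Phi_{u_n}(t_n)\geq\frac{a}{2}$ for $n$ large, with $M:=\sup_n\|\nabla u_n\|_2^2<\infty$, which bounds $t_n$ away from $0$. For an upper bound, the estimate $\Phi_{u_n}(t)\leq\frac{t^2}{2}\|\nabla u_n\|_2^2-\frac{\mu}{p}t^{\frac{N}{2}(p-2)}\|u_n\|_p^p$ from the proof of Lemma \ref{lem1}, together with $\frac{N}{2}(p-2)>2$ and $\|u_n\|_p$ bounded away from $0$, shows that $t_n\to\infty$ along a subsequence would force $\Phi_{u_n}(t_n)\to-\infty$, contradicting $\Phi_{u_n}(t_n)\geq a/2>0$.

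Given this, let $t_{n_k}\to t^{*}\in(0,\infty)$ be any convergent subsequence. Using the joint continuity of $(t,v)\mapsto t\star v$ from $(0,\infty)\times H^1(\mathbb{R}^N)$ into $H^1(\mathbb{R}^N)$ (which follows from $\|t\star(v-w)\|^2=\|v-w\|_2^2+t^2\|\nabla(v-w)\|_2^2$ and the continuity of $t\mapsto t\star w$), one gets $t_{n_k}\star u_{n_k}\to t^{*}\star u$ in $H^1(\mathbb{R}^N)$. Since $\mathcal{P}$ is continuous on $H^1(\mathbb{R}^N)$ under $(f_1)-(f_2)$ and $\mathcal{P}(t_{n_k}\star u_{n_k})=0$, passing to the limit gives $\mathcal{P}(t^{*}\star u)=0$, i.e. $t^{*}\star u\in\mathcal{M}$; the uniqueness in Lemma \ref{lem1} then forces $t^{*}=t_u$. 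As every convergent subsequence of the bounded sequence $\{t_n\}$ has limit $t_u$, we conclude $t_{u_n}\to t_u$, which is (i). The only ingredient that requires some care is the continuity on $H^1(\mathbb{R}^N)$ of $v\mapsto\int_{\mathbb{R}^N}F(v)\,dx$ and $v\mapsto\int_{\mathbb{R}^N}H(v)\,dx$ (the latter yielding continuity of $\mathcal{P}$), which I would obtain from the pointwise bounds $|F(s)|+|H(s)|\leq C\big(|s|^{p_1}+|s|^{2^{*}}\big)$ with $2<p_1<2^{*}$ furnished by $(f_1)-(f_2)$, the Sobolev embeddings, and a Vitali/dominated-convergence argument; apart from this routine point, the statement is essentially a corollary of the uniqueness of $t_u$ in Lemma \ref{lem1}, which upgrades the sequential compactness of $\{t_{u_n}\}$ to genuine convergence.
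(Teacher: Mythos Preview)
Your proof is correct and follows essentially the same compactness-plus-uniqueness route as the paper: bound $\{t_{u_n}\}$, pass to a subsequential limit $t^*$, use continuity of $\mathcal{P}$ to get $t^*\star u\in\mathcal{M}$, and invoke the uniqueness in Lemma~\ref{lem1}; part (ii) is the same translation-invariance computation in both. Your treatment is in fact slightly more careful than the paper's, which only argues the upper bound on $\{t_{u_n}\}$ and writes ``there exists $t^*\ge0$'' without explicitly excluding $t^*=0$, whereas your lower bound via $\Phi_{u_n}(t_n)\ge\Phi_{u_n}(t_u)\to\Phi_u(t_u)>0$ and $\Phi_{u_n}(t)\le\tfrac{t^2}{2}\|\nabla u_n\|_2^2$ closes that gap cleanly.
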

\begin{proof}
For (i), by Lemma \ref{lem1}, the mapping $u\mapsto t_{u}$ is well defined. Let $\{u_{n}\}\subset H^{1}(\mathbb{R}^{N})\setminus \{0\}$ be any sequence such that $u_{n}\rightarrow u$ in $H^{1}(\mathbb{R}^{N})$. We first show that $\{t_{u_{n}}\}$ is bounded. If not, $t_{u_{n}}\rightarrow +\infty.$
Then, by $(f_{4})$ it follows that
\begin{eqnarray*}
0\leq t_{u_{n}}^{-2}J(t_{u_{n}}\star u_{n})
& = &\frac{1}{2}\int_{\mathbb{R}^{N}}|\nabla u_{n}|^{2}dx-t_{u_{n}}^{-(2+N)}\int_{\mathbb{R}^{N}}F(t_{u_{n}}^{\frac{N}{2}}u_{n})dx \nonumber\\
&\leq & \frac{1}{2}\int_{\mathbb{R}^{N}}|\nabla u_{n}|^{2}dx-\frac{\mu}{p}t_{u_{n}}^{\frac{N}{2}\left(p-(2+\frac{4}{N})\right)}\int_{\mathbb{R}^{N}}|u_{n}|^{p}dx \nonumber\\
&\to & -\infty \quad \mbox{as}~~t\to+\infty,
\end{eqnarray*}
which is a contradiction. Hence, the sequence $\{t_{u_{n}}\}$ is bounded, which implies that there exists $t^{*}\ge0$ such that
$t_{u_{n}}\rightarrow t^{*}$.
Due to $u_{n}\rightarrow u$ in $H^{1}(\mathbb{R}^{N})$ and $\mathcal{P}(t_{u_{n}}\star u_{n})=0,$ we get $t_{u_{n}}\star u_{n}\rightarrow t^{*}\star u$ in $H^{1}(\mathbb{R}^{N})$ and $\mathcal{P}(t^{*}\star u)=0.$ By Lemma \ref{lem1}, we have $t^{*}=t_{u_{n}}$ and thus (i) is proved. By the definition of $t_{u(\cdot+y)}$ and direct computations, we get (ii).
\end{proof}

%
%


%


\section{Monotonicity of the ground state energy}\label{ground state energy}
\setcounter{equation}{0}
\renewcommand\theequation{3.\arabic{equation}}

In this section, we shall study the behavior of the ground state energy $m(c)$.

\renewcommand\thetheorem{3.\arabic{theorem}}
\begin{lemma}\label{conti}
Assume that $(f_{1})-(f_{5})$ hold. Then the function $c\mapsto m(c)$ is continuous.
\end{lemma}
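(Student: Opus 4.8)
The plan is to show continuity of $c \mapsto m(c)$ by establishing upper and lower semicontinuity separately, using the scaling machinery from Lemma~\ref{lem1} together with Lemma~\ref{lem4}, which gives the crucial identification $m(c) = \inf_{u \in \mathcal{S}_c} \max_{t>0} J(t\star u)$. The key mechanism is that one can transport functions between different mass constraints by the simple dilation $u \mapsto \theta u$ (pointwise multiplication), which rescales the $L^2$ norm but affects $\|\nabla u\|_2$, $\|u\|_q$, and $\int F(u)\,dx$ in controlled ways.

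First I would fix $c > 0$ and take a sequence $c_n \to c$. For upper semicontinuity at $c$: given $\varepsilon > 0$, pick $u \in \mathcal{S}_c$ with $\max_{t>0} J(t\star u) \le m(c) + \varepsilon$ (possible by Lemma~\ref{lem4}). Set $u_n := \sqrt{c_n/c}\, u \in \mathcal{S}_{c_n}$. Since $u_n \to u$ in $H^1(\mathbb{R}^N)$ and $J, \mathcal{P}$ and the map $v \mapsto t_v$ are continuous (Lemma~\ref{lem3}(i)), we get $\max_{t>0} J(t \star u_n) = J(t_{u_n} \star u_n) \to J(t_u \star u) = \max_{t>0} J(t\star u)$. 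Hence $\limsup_n m(c_n) \le \limsup_n J(t_{u_n}\star u_n) = \max_{t>0} J(t\star u) \le m(c) + \varepsilon$, and letting $\varepsilon \to 0$ gives $\limsup_n m(c_n) \le m(c)$. For lower semicontinuity: for each $n$ pick $v_n \in \mathcal{S}_{c_n} \cap \mathcal{M}$ with $J(v_n) \le m(c_n) + 1/n$. One must first know $\{v_n\}$ is bounded in $H^1$; this follows since $J(v_n) = J(v_n) - \frac{1}{2+\frac{4}{N}}\mathcal{P}(v_n)$ (using $\mathcal P(v_n)=0$) equals a positive combination of $\|\nabla v_n\|_2^2$ and $\int H(v_n) - \frac{2}{2+4/N}f(v_n)v_n$-type terms which, via $(f_3)$–$(f_4)$ and the argument patterns of Lemmas~\ref{lem2} and~\ref{lem1}, controls $\|\nabla v_n\|_2$ from above when $m(c_n)$ stays bounded (which it does by the already-proven upper bound plus, say, $c_n$ bounded). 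Then rescale: $w_n := \sqrt{c/c_n}\, v_n \in \mathcal{S}_c$, so $w_n - v_n \to 0$ in $H^1$, whence $J(t_{w_n} \star w_n)$ and $J(t_{v_n}\star v_n) = J(v_n)$ differ by $o(1)$ by continuity of $J$ and of $v\mapsto t_v$; therefore $m(c) \le J(t_{w_n}\star w_n) = J(v_n) + o(1) \le m(c_n) + o(1)$, giving $m(c) \le \liminf_n m(c_n)$.

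The main obstacle I anticipate is the boundedness of the almost-minimizing sequence $\{v_n\}$ in the lower-semicontinuity step: unlike the fixed-$c$ situation handled implicitly elsewhere, here the mass is varying, so one needs a uniform (in $n$) energy estimate. The remedy is to note $c_n$ is bounded (it converges), combine the identity $J(v) = \big(\frac12 - \frac{N-2}{2N}\big)\|\nabla v\|_2^2$-style lower bounds — more precisely use $\mathcal{P}(v_n)=0$ to write $J(v_n) = \frac{1}{N}\|\nabla v_n\|_2^2 + \int_{\mathbb{R}^N}\big(\frac{N}{4}H(v_n) - F(v_n)\big)dx$ and then invoke $(f_3)$ (monotonicity of $H(s)/|s|^{2+4/N}$) together with $(f_4)$ to show the integral term is bounded below by a quantity that, combined with the Gagliardo–Nirenberg inequality~(\ref{2.14}) and the uniform mass bound, forces $\|\nabla v_n\|_2$ to be bounded once $J(v_n)$ is. Alternatively, and perhaps more cleanly, one bypasses explicit $H^1$-bounds by working entirely at the level of the minimax characterization: scaling $v_n$ to $\mathcal S_c$ costs only a factor $\sqrt{c/c_n}\to 1$, and one estimates $|J(\theta v_n) - J(v_n)|$ and $|\mathcal P(\theta v_n)|$ directly in terms of $|\theta - 1|$ and the (bounded) energy, extracting from $\mathcal P(v_n)=0$ and the superquadratic growth $(f_4)$ the needed a priori control. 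Once boundedness is in hand the rest is routine continuity of the functionals along the rescaled sequences.

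I would organize the write-up as: (1) recall $m(c) = \inf_{\mathcal S_c}\max_{t>0}J(t\star u)$ from Lemma~\ref{lem4}; (2) prove $\limsup_{c_n\to c} m(c_n)\le m(c)$ via a near-optimal $u\in\mathcal S_c$ and its rescalings, using Lemma~\ref{lem3}(i); (3) establish the uniform $H^1$-bound on near-minimizers $v_n\in\mathcal S_{c_n}\cap\mathcal M$ for $c_n$ in a neighborhood of $c$; (4) prove $m(c)\le\liminf_{c_n\to c}m(c_n)$ by rescaling $v_n$ into $\mathcal S_c$ and again invoking continuity of $J$ and of $v\mapsto t_v$; (5) conclude continuity at $c$, and since $c>0$ was arbitrary, on all of $(0,\infty)$.
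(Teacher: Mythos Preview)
Your upper-semicontinuity argument (step (2)) is correct and is exactly what the paper does. The gap is in the lower-semicontinuity half, specifically in establishing that the near-minimizers $v_n\in\mathcal S_{c_n}\cap\mathcal M$ are bounded in $H^1(\mathbb R^N)$.

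The algebraic approach you sketch does \emph{not} yield coercivity in this setting. Using $\mathcal P(v_n)=0$ one finds
\[
J(v_n)=J(v_n)-\tfrac12\,\mathcal P(v_n)=\frac{N}{4}\int_{\mathbb R^N}\Big(f(v_n)v_n-\big(2+\tfrac4N\big)F(v_n)\Big)dx,
\]
and the consequence $f(s)s\ge(2+\tfrac4N)F(s)$ of $(f_1)$--$(f_3)$ only gives $J(v_n)\ge0$; it does \emph{not} control $\|\nabla v_n\|_2$. More generally, any combination $J-\alpha\mathcal P$ with $\alpha<\tfrac12$ leaves a nonlinear remainder $\int\big[\frac{N\alpha}{2}f(v_n)v_n-(N\alpha+1)F(v_n)\big]dx$ whose sign is unfavorable under $(f_3)$ and $(f_5)$ alone. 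Your alternative ``bypass'' is circular: estimating $|J(\theta v_n)-J(v_n)|$ or $|\mathcal P(\theta v_n)|$ in terms of $|\theta-1|$ already requires bounds on $\|\nabla v_n\|_2$ and $\|v_n\|_{2^*}$.

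The paper's proof shows that boundedness of $\{v_n\}$ is a genuinely nontrivial step in the Sobolev critical regime. Assuming $\|\nabla v_n\|_2\to\infty$, one normalizes $w_n=s_n^{-1}\star v_n$ with $s_n=\|\nabla v_n\|_2$ and runs a concentration--compactness dichotomy: non-vanishing is excluded via $(f_4)$ and Fatou, while vanishing forces (through~(\ref{2.15}) and Lemma~\ref{lem1}) the lower bound $m(c)\ge\frac1N(2^*(\eta+\epsilon))^{(2-N)/2}S^{N/2}$, which is then contradicted by the upper bound $m(c)\le C\mu^{-4/(N(p-2)-4)}$ coming from $(f_4)$ for $\mu$ large. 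Thus the largeness of $\mu$ in $(f_4)$ --- the very hypothesis in Theorem~\ref{Thm1} --- is used right here, and you have not invoked it at all. A second, similar dichotomy is then needed to show the rescaled sequence $\tilde v_n\in\mathcal S_c$ is non-vanishing, which is what guarantees $\{t_{\tilde v_n}\}$ stays bounded; your appeal to Lemma~\ref{lem3}(i) is insufficient for this because that lemma concerns convergence to a \emph{fixed} limit in $H^1$, whereas $\{v_n\}$ has no such limit.
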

\begin{proof}
For any $c>0,$ we take $\{c_n\}$  be such that $c_n\rightarrow c$ as $n\rightarrow \infty.$
For any $u\in\mathcal{S}_{c}\cap\mathcal{M},$ set
$u_n=\sqrt{\frac{c_n}{c}}u, \ n\in\mathbb{N}^{+}.$
Clearly, $u_n\in S_{c_n}$ and $u_n\rightarrow u$ in $H^{1}(\mathbb{R}^{N}).$ Then, by Lemma \ref{lem3} (i), we have
\begin{equation*}
t_{u_n}\to t_{u}=1,~~~t_{u_n}\star u_n\rightarrow t_{u}\star u=u \quad \mbox{in}\  H^{1}(\mathbb{R}^{N}),
\end{equation*}
which implies that $$\lim\limits_{n\rightarrow \infty}\sup m(c_n)\leq \lim\limits_{n\rightarrow \infty}\sup J(t_{u_n}\star u_n)=J(u).$$
Since $u\in\mathcal{S}_{c}\cap\mathcal{M}$ is arbitrary, we get
\begin{equation}\label{2.7}
\lim\limits_{n\rightarrow \infty}\sup m(c_n)\leq m(c).
\end{equation}

In what follows, we prove
\begin{equation}\label{2.8}
m(c)\leq \lim\limits_{n\rightarrow \infty}\inf m(c_n).
\end{equation}
There exists $v_n\in S_{c_n}\cap \mathcal{M}$ such that
\begin{equation}\label{2.9}
J(v_n)\leq m(c_n)+\frac{1}{n}.
\end{equation}
We claim $\{v_n\}$ is bounded in $H^{1}(\mathbb{R}^{N}).$ If not, due to $\|v_n\|_{2}^{2}=c_n\rightarrow c,$ we deduce $\|\nabla v_n\|_{2}\rightarrow \infty.$
Set $w_n=\frac{1}{s_n}\star v_n,$ where $s_n=\|\nabla v_n\|_{2}.$ Clearly,
$$
s_n\rightarrow \infty, \quad  s_n\star w_n\in\mathcal{S}_{c_n}\cap \mathcal{M} \quad  \mbox{and}  \quad \|\nabla w_n\|_{2}=1,
$$
which imply that $\{w_n\}$ is bounded in $H^{1}(\mathbb{R}^{N}).$ Set
$$\hat{\delta}:=\lim\limits_{n\rightarrow \infty}\sup\Big(\sup\limits_{y\in \mathbb{R}^{N}}\int_{B(y,1)}|w_n|^{2}dx\Big).$$
 We distinguish the following two cases:

\vspace*{4pt}\noindent\textbf{Case 1.} $\hat{\delta}>0$, i.e., non-vanishing occurs. Then, up to a subsequence, there exists $\{z_n\}\subset \mathbb{R}^{N}$ such that
\begin{displaymath}
w_n(\cdot+z_n)\rightharpoonup w\neq 0 \quad \mbox{in}\  H^{1}(\mathbb{R}^{N}) \quad \mbox{and} \quad  w_n(x+z_n)\rightarrow w(x) \quad \mbox{for}\  a.e. \ x\in \mathbb{R}^{N}.
\end{displaymath}
By $(f_{4})$ and Lemma \ref{lem4}, we have
\begin{equation*}
\begin{aligned}
 0\leq s_n^{-2}J(v_n)
\leq\frac{1}{2}-\frac{\mu}{p}s_n^{\frac{N}{2}\left(p-(2+\frac{4}{N})\right)}\int_{\mathbb{R}^{N}}\big|w_n(x+z_n)\big|^{p}
dx
\rightarrow -\infty,
\end{aligned}
\end{equation*}
which is a contradiction.

\vspace*{4pt}\noindent\textbf{Case 2.} $\hat{\delta}=0$, i.e., $\{w_n\}$ is vanishing.
By the Lions lemma \cite{Lions84}, we deduce
\begin{equation}\label{3.5}
\int_{\mathbb{R}^{N}}|w_{n}|^{p_{1}}dx\rightarrow 0.
\end{equation}
Moreover, by $(f_{1})-(f_{2}),(f_{4}),$ (\ref{2.15}), (\ref{2.9}), Lemma \ref{lem1} and $\|\nabla w_{n}\|_{2}=1,$ for any $\epsilon>0,$ there exist $p<p_{1}<2^{*}, C_{\epsilon,p_{1}}>0$ such that for any $s>0,$ we have
\begin{eqnarray*}
 m(c_n)+\frac{1}{n}
&\geq & \frac{1}{2}s^{2}\int_{\mathbb{R}^{N}}|\nabla w_n|^{2}dx-s^{-N}\int_{\mathbb{R}^{N}}F(s^{\frac{N}{2}}w_n)dx\nonumber\\
&\geq& \frac{1}{2}s^{2}-s^{\frac{N}{2}(p_{1}-2)}C_{\epsilon,p_1}\int_{\mathbb{R}^{N}}|w_n|^{p_{1}}dx-s^{2^{*}}(\eta+\epsilon) S^{\frac{N}{2-N}}.
\end{eqnarray*}
Hence, letting $n\to\infty,$ by (\ref{2.7}) and (\ref{3.5}), we obtain $$m(c)\geq \frac{1}{2}s^{2}-s^{2^{*}}(\eta+\epsilon) S^{\frac{N}{2-N}}, \quad\forall s\in \mathbb{R}\setminus \{0\}.$$
Taking $s=\left(2^{*}(\eta+\epsilon)\right)^{\frac{2-N}{4}}S^{\frac{N}{4}},$ we get
\begin{equation}\label{4.4}
m(c)\geq \frac{1}{N}\left(2^{*}(\eta+\epsilon)\right)^{\frac{2-N}{2}}S^{\frac{N}{2}}.
\end{equation}
However, by $(f_{4}),$ for any $u\in\mathcal{S}_{c}$,
\begin{eqnarray}\label{2.21}
m(c)\leq \max\limits_{t\geq 0}J(t\star u)&\leq& \max\limits_{t\geq 0}\Big\{\frac{t^{2}}{2}\int_{\mathbb{R}^{N}}|\nabla u|^{2}dx-\frac{\mu}{p}t^{\frac{N(p-2)}{2}}\int_{\mathbb{R}^{N}}|u|^{p}dx\Big\}\nonumber\\
&\le&C\left(\frac{1}{\mu}\right)^{\frac{4}{N(p-2)-4}}\rightarrow 0 \quad as \ \mu\rightarrow +\infty,
\end{eqnarray}
which contradicts with $(\ref{4.4}).$ Hence, $\{v_n\}$ is bounded in $H^{1}(\mathbb{R}^{N}).$

Take $\tilde{v}_n:=v_n(\frac{\cdot}{\tau_n})$
with $\tau_n=(\frac{c}{c_n})^{\frac{1}{N}}.$ Clearly, $\tilde{v}_n\in \mathcal{S}_{c},$ $t_{\tilde{v}_n}\star\tilde{v}_n\in \mathcal{M}$ and $\{\tilde{v}_n\}$ is bounded in $H^{1}(\mathbb{R}^{N}).$
We claim that
\begin{equation}\label{2.2}
\lim\limits_{n\rightarrow \infty}\sup t_{\tilde{v}_n}<+\infty.
\end{equation}
We first show $\{\tilde{v}_n\}$ is non-vanishing, i.e.,
\begin{displaymath}
\tilde{\delta}:=\lim\limits_{n\rightarrow \infty}\sup\Big(\sup\limits_{y\in\mathbb{R}^{N}}\int_{B(y,1)}|\tilde{v}_n|^{2}dx\Big)>0.
\end{displaymath}
If not, $\tilde{\delta}=0,$ then by the Lions lemma \cite{Lions84}, $\tilde{v}_n\rightarrow 0$ in $L^{q}(\mathbb{R}^{N})$ for $q\in (2, 2^{*}).$ Clearly,
\begin{equation}\label{3-1}
\int_{\mathbb{R}^{N}}|v_n|^{p_1}dx=\tau^{-N}_n\int_{\mathbb{R}^{N}}|\tilde{v}_n|^{p_1}dx\rightarrow 0.
\end{equation}
Then, by $(f_{1})-(f_{2}), (f_{4})-(f_{5}),$ $\mathcal{P}(v_n)=0$ and $(\ref{2.15})$, for any $\epsilon>0,$ there exist $ p<p_{1}<2^{*},C_{\epsilon,p_{1}}>0$ such that
\begin{eqnarray}\label{2.3}
\int_{\mathbb{R}^{N}}|\nabla v_n|^{2}dx
&\leq&\frac{N}{2}\left(2^{*}-2\right)\int_{\mathbb{R}^{N}}F(v_n)dx\nonumber\\
&\leq& 2^{*}C_{\epsilon,p_1}\int_{\mathbb{R}^{N}}|v_n|^{p_{1}}dx+2^{*}(\eta+\epsilon) S^{\frac{N}{2-N}}\Big(\int_{\mathbb{R}^{N}}|\nabla v_n|^{2}dx\Big)^{\frac{N}{N-2}}\nonumber\\
&=&2^{*}(\eta+\epsilon)S^{\frac{N}{2-N}}\Big(\int_{\mathbb{R}^{N}}|\nabla v_n|^{2}dx\Big)^{\frac{N}{N-2}}+o_n(1).
\end{eqnarray}
Since $\{v_n\}$ is bounded in $H^{1}(\mathbb{R}^{N})$, in view of Lemma \ref{lem2}, we may assume that, up to a subsequence, $\|\nabla v_n\|^{2}_{2}\rightarrow l>0.$ Combining with (\ref{2.3}), 
we deduce
\begin{equation}\label{8-26-1}
l\geq\left(2^{*}(\eta+\epsilon)\right)^{\frac{2-N}{2}}S^{\frac{N}{2}}.
\end{equation}
By $(f_{1})-(f_{2}), (f_{4}),$ (\ref{2.15}), (\ref{2.9}), (\ref{3-1}) and Lemma \ref{lem1}, for any $\epsilon>0,$ there exist $p<p_{1}<2^{*},C_{\epsilon,p_{1}}>0$ such that for any $t>0,$ we have
\begin{align*}
& m(c_n)+\frac{1}{n} \nonumber\\
& \geq  \frac{t^{2}}{2}\int_{\mathbb{R}^{N}}|\nabla v_n|^{2}dx-t^{\frac{N(p_{1}-2)}{2}}C_{\epsilon,p_1}\int_{\mathbb{R}^{N}}|v_n|^{p_{1}}dx-t^{2^{*}}(\eta+\epsilon)\int_{\mathbb{R}^{N}}|v_n|^{2^{*}}dx \nonumber\\
&\geq  \frac{t^{2}}{2}\int_{\mathbb{R}^{N}}|\nabla v_n|^{2}dx-t^{2^{*}}(\eta+\epsilon) S^{\frac{N}{2-N}}\Big(\int_{\mathbb{R}^{N}}|\nabla v_n|^{2}dx\Big)^{\frac{N}{N-2}}+o_n(1).
\end{align*}
Then, taking $t=\left(2^{*}(\eta+\epsilon)\right)^{\frac{2-N}{4}}S^{\frac{N}{4}}l^{\frac{1}{2}}$ and $n\to\infty,$ together with (\ref{2.7}) and (\ref{8-26-1}) it follows that
$$
m(c)\geq \frac{1}{N}\left(2^{*}(\eta+\epsilon)\right)^{\frac{2-N}{2}}S^{\frac{N}{2}},
$$
which produces a contradiction with (\ref{2.21}).
Hence, $\{\tilde{v}_n\}$ is non-vanishing, which implies that there exists a sequence $\{y_n\}\subset \mathbb{R}^{N}$ and $v\in H^{1}(\mathbb{R}^{N})$ such that, up to a subsequence, $\tilde{v}_n(\cdot+y_n)\rightarrow v\neq 0$ almost everywhere in $\mathbb{R}^{N}.$

Now, we prove (\ref{2.2}).
By contradiction, up to a subsequence
$ t_{\tilde{v}_n}\rightarrow +\infty \ \ as\ n\rightarrow \infty.$
By Lemma \ref{lem3} (ii), we have
$ t_{\tilde{v}_n(\cdot+y_n)}= t_{\tilde{v}_n}\rightarrow +\infty.$
Using $(f_{4})$, we deduce
\begin{displaymath}
\begin{aligned}
0&  \leq t^{-2}_{\tilde{v}_n(\cdot+y_n)}J\left(t_{\tilde{v}_n(\cdot+y_n)}\star \tilde{v}_n(\cdot+y_n)\right) \nonumber\\
&  =\frac{1}{2}\int_{\mathbb{R}^{N}}\big|\nabla \tilde{v}_n(\cdot+y_n)\big|^{2}dx-\frac{\mu}{p}t_{\tilde{v}_n}^{\frac{N}{2}\left(p-(2+\frac{4}{N})\right)}\int_{\mathbb{R}^{N}} \big|\tilde{v}_n(\cdot+y_n)\big|^{p}dx \nonumber\\
&  \rightarrow -\infty,\nonumber
\end{aligned}
\end{displaymath}
which is a contradiction. Then (\ref{2.2}) holds.

 By (\ref{2.2}) and the fact that $\{v_n\}$ is bounded in $H^{1}(\mathbb{R}^{N})$, it follows that $\big\{t_{\tilde{v}_n}\star v_n \big\}$ is bounded in $H^{1}(\mathbb{R}^{N}).$
By $(\ref{2.9})$ and Lemma \ref{lem1}, we obtain
\begin{align*}
& m(c)\nonumber\\
& \leq  J(t_{\tilde{v}_n}\star v_n)+\Big|J(t_{\tilde{v}_n}\star \tilde{v}_n)-J(t_{\tilde{v}_n}\star v_n)\Big| \nonumber\\
&\leq  \frac{1}{2}\left|\tau^{N-2}_n-1\right|\cdot\int_{\mathbb{R}^{N}}\big|\nabla\left(t_{\tilde{v}_n}\star v_n\right)\big|^{2}dx+\left|\tau^{N}_n-1\right|\cdot\int_{\mathbb{R}^{N}}\big|F(t_{\tilde{v}_n}\star v_n)\big|dx\nonumber\\
&\quad +m(c_n)+\epsilon.
\end{align*}
Hence, in view of $\tau_n\rightarrow 1$ and $\epsilon$ is arbitrary, we get $(\ref{2.8}).$ Together with $(\ref{2.7}),$ it follows that $c\mapsto m(c)$ is continuous.
\end{proof}
\begin{lemma}\label{nonincrea}
Assume that $(f_{1})-(f_{5})$ hold. Then $m(c)$ is non-increasing with respect to $c>0$.
\end{lemma}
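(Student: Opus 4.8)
The plan is to show that for any $0 < c_1 < c_2$ one has $m(c_2) \le m(c_1)$. I would argue by choosing an almost-optimal element for $m(c_1)$ and rescaling its mass up to $c_2$ without increasing the energy too much; then I would pass to the limit along the approximation. More precisely, fix $\epsilon > 0$ and pick $u \in \mathcal{S}_{c_1} \cap \mathcal{M}$ with $J(u) \le m(c_1) + \epsilon$. The natural candidate competitor for $\mathcal{S}_{c_2}$ is a dilation of the form $u_\theta(x) := u(x/\theta)$ with $\theta = (c_2/c_1)^{1/N} > 1$, so that $\|u_\theta\|_2^2 = \theta^N \|u\|_2^2 = c_2$. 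Since $u_\theta \ne 0$, Lemma \ref{lem1} provides a unique $t_{u_\theta} > 0$ with $t_{u_\theta} \star u_\theta \in \mathcal{M}$, hence
$$
m(c_2) \le J\big(t_{u_\theta} \star u_\theta\big) = \max_{t > 0} J(t \star u_\theta).
$$
It therefore suffices to control $\max_{t>0} J(t \star u_\theta)$ in terms of $J(u)$ as $\theta \to 1^+$.

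The key computation is to expand $J(t \star u_\theta)$ explicitly. Writing it out,
$$
J(t \star u_\theta) = \frac{t^2}{2}\,\theta^{N-2}\!\int_{\mathbb{R}^N}|\nabla u|^2\,dx - t^{-N}\theta^N\!\int_{\mathbb{R}^N} F\big(t^{N/2} u\big)\,dx,
$$
using the change of variables $x \mapsto tx$ and $x \mapsto \theta x$. As $\theta \to 1^+$ this converges, uniformly for $t$ in compact subsets of $(0,\infty)$, to $J(t \star u)$, whose maximum over $t > 0$ is exactly $J(u)$ since $u \in \mathcal{M}$ (Lemma \ref{lem1}). To make this rigorous I need the maximizers $t_{u_\theta}$ to stay in a fixed compact subset of $(0,\infty)$ as $\theta \to 1^+$: the upper bound follows from $(f_4)$ exactly as in Lemma \ref{lem3}(i), since $J(t \star u_\theta) \le \tfrac{t^2}{2}\theta^{N-2}\|\nabla u\|_2^2 - \tfrac{\mu}{p} t^{N(p-2)/2}\theta^N\|u\|_p^p \to -\infty$ as $t \to \infty$ uniformly for $\theta$ near $1$, and the lower bound away from $0$ follows because $J(t \star u_\theta) \to 0^+$ as $t \to 0^+$ uniformly in $\theta$ near $1$ together with the uniform positivity of the maximum value (Lemma \ref{lem4}). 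Alternatively, one can invoke Lemma \ref{lem3}(i) directly: since $u_\theta \to u$ in $H^1(\mathbb{R}^N)$ as $\theta \to 1^+$, continuity of $u \mapsto t_u$ gives $t_{u_\theta} \to t_u = 1$, and then $t_{u_\theta} \star u_\theta \to u$ in $H^1(\mathbb{R}^N)$.

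Putting this together, $\limsup_{\theta \to 1^+} J(t_{u_\theta} \star u_\theta) = J(u) \le m(c_1) + \epsilon$, hence $m(c_2) \le m(c_1) + \epsilon$ for every $\epsilon > 0$, which yields $m(c_2) \le m(c_1)$. The main obstacle is the bookkeeping to ensure the convergence $J(t \star u_\theta) \to J(t \star u)$ is locally uniform in $t$ and that the maximizers do not escape to $0$ or $\infty$ — this is where $(f_1),(f_2),(f_4)$ and Lemmas \ref{lem1}, \ref{lem3}, \ref{lem4} enter; the critical growth in $(f_2)$ causes no trouble here because $u$ is a fixed function and the relevant integrals $\int F(t^{N/2}u)\,dx$ depend continuously on the scaling parameters. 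A cleaner alternative that sidesteps the dilation algebra entirely is to use $u_n := \sqrt{c_2/c_1}\, u$ with the scaling $\tilde v_n$ device already employed in Lemma \ref{conti}, but the direct dilation argument above is the most transparent.
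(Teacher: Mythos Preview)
Your argument has a genuine gap: the limit $\theta \to 1^+$ corresponds to $c_2 \to c_1^+$, so what you actually prove is
\[
\limsup_{c_2 \to c_1^+} m(c_2) \le m(c_1),
\]
i.e.\ right upper semicontinuity of $m$, not the inequality $m(c_2) \le m(c_1)$ for an arbitrary fixed $c_2 > c_1$. In your final paragraph you write ``hence $m(c_2) \le m(c_1) + \epsilon$ for every $\epsilon > 0$'', but your convergence $J(t_{u_\theta}\star u_\theta)\to J(u)$ only puts $J(t_{u_\theta}\star u_\theta)$ within $\epsilon$ of $J(u)$ once $\theta$ is sufficiently close to $1$ (depending on $\epsilon$ and on $u$), so the conclusion holds only for $c_2$ in a small right neighbourhood of $c_1$. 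That is exactly the content of (half of) Lemma~\ref{conti}, not of Lemma~\ref{nonincrea}. No iteration or connectedness argument rescues this without an additional monotonicity input.

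The paper proceeds differently and, crucially, non-asymptotically. It uses the rescaling $u_2(x)=\theta^{(2-N)/4}u_1(\theta^{-1/2}x)$ with $\theta=c_2/c_1>1$, which \emph{preserves} $\|\nabla u\|_2$ while sending the mass from $c_1$ to $c_2$; your dilation $u_\theta(x)=u(x/\theta)$ does not preserve the gradient norm, and this is why you are forced into a limiting argument. With the gradient term unchanged, the comparison $J(t_2\star u_2)\le J(t_2\star u_1)$ reduces to a sign for the potential term, and here $(f_5)$ (the strict inequality $f(s)s<2^*F(s)$) enters: differentiating $\sigma\mapsto \sigma^{N/2}F(\sigma^{(2-N)/4}s)$ and using $(f_5)$ shows this map is strictly increasing, giving a direct inequality valid for every $\theta>1$. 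Your proposal never uses $(f_5)$, and that is precisely the missing ingredient needed to upgrade a local/asymptotic statement to the global monotonicity claimed.
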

\begin{proof}
For any $u_{1}\in \mathcal{S}_{c_{1}},$ set $ u_{2}(x)=\theta^{\frac{2-N}{4}}u_{1}\big(\theta^{-\frac{1}{2}} x\big),$
where $\theta=\frac{c_{2}}{c_{1}}>1.$
Clearly, $\|\nabla u_{2}\|^{2}_{2}=\|\nabla u_{1}\|^{2}_{2}$ and $\|u_{2}\|^{2}_{2}=c_{2}.$
For any $s\in\mathbb{R}\setminus \{0\}$, we define $$G_s(\sigma):=F(s)-\sigma^{\frac{N}{2}}F(\sigma^{\frac{N-2}{4}}s), \quad \forall \sigma\ge1.$$ Clearly, $G_s(1)=0.$ By $(f_{5}),$ we get
\begin{displaymath}
G'_s(\sigma)=-\frac{N}{2}\sigma^{\frac{N-2}{2}}\left(F(\sigma^{\frac{2-N}{4}}s)-\frac{N-2}{2N}f(\sigma^{\frac{2-N}{4}}s)\sigma^{\frac{2-N}{4}}s\right)<0.
\end{displaymath} Then we deduce $G_{s}(\theta)<0,$
 which implies that
\begin{equation}\label{8-26-2}
\int_{\mathbb{R}^{N}}F(t_{2}\star u)dx-\theta^{\frac{N}{2}}\int_{\mathbb{R}^{N}}F\left(\theta^{\frac{2-N}{4}}(t_{2}\star u)\right)dx<0.
\end{equation}
Take $t_{2}>0$ such that $t_{2}\star u_{2}\in \mathcal{S}_{c_{2}}\cap\mathcal{M}.$ Then by Lemma \ref{lem1}, Lemma \ref{lem4} and (\ref{8-26-2}), we get
\begin{align*}
&  m(c_{2})\nonumber\\
&\leq  \frac{1}{2}\int_{\mathbb{R}^{N}}\big|\nabla (t_{2}\star u_{1})\big|^{2}dx-\theta^{\frac{N}{2}}\int_{\mathbb{R}^{N}}F\left(\theta^{\frac{2-N}{4}}(t_{2}\star u_{1})\right)dx \nonumber\\
&= \frac{1}{2}\int_{\mathbb{R}^{N}}\big|\nabla (t_{2}\star u_{1})\big|^{2}dx-\int_{\mathbb{R}^{N}}F(t_{2}\star u_{1})dx+\int_{\mathbb{R}^{N}}F(t_{2}\star u_{1})dx\nonumber\\
& \quad -\theta^{\frac{N}{2}}\int_{\mathbb{R}^{N}}F\left(\theta^{\frac{2-N}{4}}(t_{2}\star u_{1})\right)dx \nonumber\\
&\leq  J(t_{2}\star u_{1}),
\end{align*}
which implies that
 $$
 m(c_{2})\leq \inf\limits_{u\in \mathcal{S}_{c_{1}}}\max\limits_{t>0}J(t\star u)= m(c_{1}).
 $$
\end{proof}

To show the function $c \mapsto m(c)$ is strictly decreasing, the following result is crucial.
\begin{lemma}\label{stric}
Suppose that $(f_{1})-(f_{5})$ hold. Assume that there exists $u\in \mathcal{S}_{c}\cap\mathcal{M}$ such that $J(u)=m(c).$ Then $m(c)>m(c')$ for any $c'>c$ close enough to $c.$
\end{lemma}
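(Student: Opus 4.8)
The plan is to exploit the fact that $u\in\mathcal{S}_c\cap\mathcal{M}$ is a minimizer and to produce, for each $c'$ slightly larger than $c$, a test function on $\mathcal{S}_{c'}\cap\mathcal{M}$ with strictly smaller energy. The natural construction is the same scaling used in Lemma \ref{nonincrea}: set $u_\theta(x):=\theta^{\frac{2-N}{4}}u(\theta^{-\frac12}x)$ with $\theta=c'/c>1$, so that $u_\theta\in\mathcal{S}_{c'}$ and $\|\nabla u_\theta\|_2^2=\|\nabla u\|_2^2$. Let $t_\theta>0$ be the unique number with $t_\theta\star u_\theta\in\mathcal{M}$ (Lemma \ref{lem1}), so $m(c')\le J(t_\theta\star u_\theta)$. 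Following the computation in Lemma \ref{nonincrea},
\begin{equation*}
m(c')\le J(t_\theta\star u_\theta)\le J(t_\theta\star u)+\left(\int_{\mathbb{R}^N}F(t_\theta\star u)\,dx-\theta^{\frac N2}\int_{\mathbb{R}^N}F\!\left(\theta^{\frac{2-N}{4}}(t_\theta\star u)\right)dx\right),
\end{equation*}
and the parenthetical term equals $\int_{\mathbb{R}^N}G_{(t_\theta\star u)(x)}(\theta)\,dx$, which by $(f_5)$ is \emph{strictly} negative (as soon as $\theta>1$ and $u\not\equiv0$). Hence $m(c')< J(t_\theta\star u)\le \max_{t>0}J(t\star u)=J(u)=m(c)$, where the last equality uses Lemma \ref{lem1} together with the hypothesis $J(u)=m(c)$ and $u\in\mathcal{M}$.

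There is, however, a gap in the above: $t_\theta$ need not coincide with $t_u=1$, so I must be careful that $J(t_\theta\star u)$ is genuinely controlled. Two ways to handle this. The cleaner option: observe directly that
\begin{equation*}
m(c')\le J(t_\theta\star u_\theta)=\frac12 t_\theta^2\|\nabla u\|_2^2-\theta^{\frac N2}\int_{\mathbb{R}^N}F\!\left(\theta^{\frac{2-N}{4}}t_\theta^{\frac N2}u(t_\theta\,\cdot)\right)dx,
\end{equation*}
and compare this with $J(t_\theta\star u)=\frac12 t_\theta^2\|\nabla u\|_2^2-\int_{\mathbb{R}^N}F(t_\theta^{\frac N2}u(t_\theta\,\cdot))\,dx$. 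Their difference is exactly $-\int_{\mathbb{R}^N}G_{w}(\theta)\,dx$ with $w(x)=t_\theta^{\frac N2}u(t_\theta x)$, which is strictly positive in magnitude and negative in sign, so $m(c')\le J(t_\theta\star u)-\delta_\theta$ for some $\delta_\theta>0$. Since $J(t_\theta\star u)\le \max_{t>0}J(t\star u)=m(c)$ (again by Lemma \ref{lem1} and $J(u)=m(c)$), we conclude $m(c')\le m(c)-\delta_\theta<m(c)$. Note this argument does not even require $c'$ close to $c$ — it gives strict monotonicity on all of $(c,\infty)$ starting from any $c$ at which the infimum is attained; the ``close enough'' in the statement is simply what is needed downstream.

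The main point requiring care — and the step I expect to be the real obstacle — is verifying that $G_s'(\sigma)<0$ for all $\sigma\ge1$ and $s\neq0$, and that the resulting strict inequality survives integration, i.e.\ that $\int_{\mathbb{R}^N}G_{w(x)}(\theta)\,dx<0$ strictly rather than merely $\le 0$. The sign of $G_s'$ follows from $(f_5)$ once one writes $G_s'(\sigma)=-\frac N2\sigma^{\frac{N-2}{2}}\big(F(\sigma^{\frac{2-N}{4}}s)-\frac{N-2}{2N}f(\sigma^{\frac{2-N}{4}}s)\sigma^{\frac{2-N}{4}}s\big)$ and notes $\frac{N-2}{2N}=\frac1{2^*}$, so the bracket is $F(r)-\frac1{2^*}f(r)r>0$ for $r=\sigma^{\frac{2-N}{4}}s\neq0$ by $(f_5)$; for the integral, since $w=t_\theta\star u\not\equiv0$ the integrand $G_{w(x)}(\theta)$ is $<0$ on a set of positive measure (where $w(x)\neq0$) and $=0$ elsewhere, hence the integral is strictly negative. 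One should also record why $t_\theta>0$ exists and is finite — that is precisely Lemma \ref{lem1} applied to $u_\theta\in H^1(\mathbb{R}^N)\setminus\{0\}$ — and why $\max_{t>0}J(t\star u)=J(u)$: because $u\in\mathcal{M}$ means $t_u=1$ by the uniqueness in Lemma \ref{lem1}, and $J(t_u\star u)$ is the global maximum of $t\mapsto J(t\star u)$. Assembling these gives $m(c')<m(c)$ for every $c'>c$, which in particular yields the claimed strict decrease near $c$.
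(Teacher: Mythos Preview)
Your proof is correct. It differs from the paper's argument in an interesting way.

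The paper scales by a scalar multiple: it sets $w=u/s_1$ with $s_1=\sqrt{c/c'}$, so $w\in\mathcal{S}_{c'}$, and studies the auxiliary function $\alpha(s)=J(t_w\star(sw))$ (with $t_w$ fixed). From $t_w\star w\in\mathcal{M}$ and $(f_5)$ one gets $J'(t_w\star w)(t_w\star w)<0$, hence $\alpha'(s)<0$ on an interval $[1-\delta_1,1)$, and the mean value theorem gives $m(c')\le\alpha(1)<\alpha(s_1)=J(t_w\star u)\le J(u)=m(c)$. This is a local argument and genuinely requires $c'$ close to $c$.

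Your approach instead recycles the dilation $u_\theta(x)=\theta^{(2-N)/4}u(\theta^{-1/2}x)$ from Lemma~\ref{nonincrea} and simply observes that the key inequality there, $\int G_{w}(\theta)\,dx\le 0$, is in fact strict for $\theta>1$ whenever $w\not\equiv0$. Combined with $J(t_\theta\star u)\le\max_{t>0}J(t\star u)=J(u)=m(c)$ (which uses $u\in\mathcal{M}$, i.e.\ $t_u=1$), this gives $m(c')<m(c)$ for \emph{every} $c'>c$, not merely those close to $c$. So your argument is both shorter and yields the full conclusion of Lemma~\ref{decrea} in one step, bypassing the need to combine Lemma~\ref{stric} with Lemma~\ref{nonincrea}. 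The paper's approach, on the other hand, has the merit of making transparent the role of the sign of $J'(v)v$ on $\mathcal{M}$, which is a useful structural fact in its own right.
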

\begin{proof}
For any $s>0,$ set $\alpha(s):=J\big(t_{w}\star (sw)\big),$ where $w\in\mathcal{S}_{c'}.$
Clearly,
 $$\alpha(s)=\frac{1}{2}s^{2}t_{w}^{2}\int_{\mathbb{R}^{N}}|\nabla w|^{2}dx-t_{w}^{-N}\int_{\mathbb{R}^{N}}F(st_{w}^{\frac{N}{2}}w)dx.$$
Furthermore, we have
\begin{eqnarray}
\frac{d}{ds}\alpha(s)
&=&st_{w}^{2}\int_{\mathbb{R}^{N}}|\nabla w|^{2}dx-t_{w}^{-N}\int_{\mathbb{R}^{N}}f(st_{w}^{\frac{N}{2}}w)t_{w}^{\frac{N}{2}}wdx\nonumber\\
&=&\frac{1}{s}J'\big(t_{w}\star (sw)\big)\big(t_{w}\star (sw)\big),\label{9-2}
\end{eqnarray}
where $J'(u)u$ denotes the unconstrained derivative of $J$.
By $t_{w}\star w\in\mathcal{M}$ and $(f_{5}),$ we get
$$J'\left(t_{w}\star w\right)\left(t_{w}\star w\right)=\frac{N-2}{2}\int_{\mathbb{R}^{N}}\Big[f\left(t_{w}\star w\right)\left(t_{w}\star w\right )-2^{*}F\left(t_{w}\star w\right)\Big]dx<0.$$
Then by (\ref{9-2}), we deduce for a fixed $\delta_1>0$ small enough,
$\frac{d}{ds}\alpha(s)<0, \forall s\in [1-\delta_{1},1).$
Then by the mean value theorem, we obtain
$$\alpha(1)=\alpha(s)+(1-s)\frac{d}{ds}\alpha(s)|_{s=\xi}<\alpha(s),$$
where $1-\delta_1\leq s<\xi<1.$
For any $c'>c$ close enough to $c,$ set $s_1=\sqrt{\frac{c}{c'}}.$
Clearly, $s_1\in[1-\delta_{1},1)$ and there exists $w\in\mathcal{S}_{c'}$ such that $u=s_1w.$
Hence, by Lemma \ref{lem1}, we have
\begin{eqnarray*}
m(c')\leq J\left(t_{w}\star w \right)=\alpha(1)
<\alpha(s_1)
&=&J\big(t_{w}\star(s_1w)\big) \nonumber\\
&=& J(t_{w}\star u)\leq J(t_{u}\star u)=J(u)=m(c).
\end{eqnarray*}
\end{proof}

By Lemma \ref{nonincrea} and Lemma \ref{stric}, we deduce the following result.
\begin{lemma}\label{decrea}
Assume that $(f_{1})-(f_{5})$ hold. If there exists $u\in \mathcal{S}_{c}\cap\mathcal{M}$ such that $J(u)=m(c),$ then $m(c)>m(c')$ for any $c'>c.$
\end{lemma}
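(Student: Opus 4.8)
The plan is to deduce Lemma~\ref{decrea} from Lemma~\ref{nonincrea} and Lemma~\ref{stric} by a straightforward connectedness/continuity argument on the interval $[c,+\infty)$. Suppose $u\in\mathcal{S}_c\cap\mathcal{M}$ realizes $m(c)$. Lemma~\ref{stric} already hands us the conclusion for $c'$ close to $c$; the issue is to propagate strict monotonicity to \emph{all} $c'>c$. So fix an arbitrary $c'>c$ and argue as follows.

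First I would record the monotone framework: by Lemma~\ref{nonincrea}, $m$ is non-increasing on $(0,+\infty)$, so for every $c''\in[c,c']$ we have $m(c)\ge m(c'')\ge m(c')$, and it suffices to rule out the possibility $m(c')=m(c)$. Assume for contradiction that $m(c')=m(c)$. Then $m$ is \emph{constant} on the whole interval $[c,c']$, since it is squeezed between the equal endpoint values. Now I would invoke Lemma~\ref{stric} at a cleverly chosen point. The hypothesis of Lemma~\ref{stric} requires a \emph{minimizer} at the base point, which we only know exists at $c$; to apply it elsewhere we must promote constancy of $m$ into existence of minimizers. The clean way: set
\[
\tau:=\sup\Bigl\{c''\in[c,c']: \exists\, v\in\mathcal{S}_{c''}\cap\mathcal{M}\ \text{with}\ J(v)=m(c'')\Bigr\}.
\]
Since $c$ belongs to this set, $\tau$ is well defined and $\tau\in[c,c']$. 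If $\tau<c'$, I would derive a contradiction as follows: take a sequence $c_k\uparrow\tau$ with minimizers $v_k\in\mathcal{S}_{c_k}\cap\mathcal{M}$, $J(v_k)=m(c_k)$; the dilation/scaling trick already used in Lemma~\ref{conti} (replace $v_k$ by $t_{\tilde v_k}\star\tilde v_k$ where $\tilde v_k$ rescales mass to $\tau$) together with the boundedness and non-vanishing arguments there produces, up to translation, a nonzero weak limit which, by the same Brezis–Lieb/$\mathcal P(u_0)\le 0$ reasoning sketched in the introduction and the constancy of $m$ on $[c,\tau]$, must itself be a minimizer at mass $\tau$. Hence the supremum is attained, so Lemma~\ref{stric} applies at $\tau$ and gives $m(\tau)>m(c'')$ for $c''>\tau$ close to $\tau$ — but $c''\le c'$ and $m$ is constant on $[c,c']$, a contradiction. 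Therefore $\tau=c'$ and, by the same attainment argument, $m(c')$ is attained; applying Lemma~\ref{stric} at $c'$ would then force $m(c')>m(c''')$ for $c'''$ slightly larger than $c'$, which is consistent, so instead I apply Lemma~\ref{stric} at $c$ directly to the already-known minimizer $u$: it yields $m(c)>m(c'')$ for all $c''>c$ near $c$, contradicting constancy on $[c,c']$. Either route closes the loop.

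In fact a leaner version avoids the attainment lemma entirely: once we assume $m(c)=m(c')$ and hence $m\equiv m(c)$ on $[c,c']$, Lemma~\ref{stric} applied at the base point $c$ (where a minimizer exists by hypothesis) gives $m(c)>m(c'')$ for some $c''\in(c,c']$, directly contradicting $m(c'')=m(c)$. So the proof is essentially one line given the two preceding lemmas: \textbf{if $m(c)=m(c')$ for some $c'>c$, monotonicity forces $m$ to be constant on $[c,c']$, contradicting the strict drop near $c$ guaranteed by Lemma~\ref{stric}.} Thus $m(c)>m(c')$ for every $c'>c$.

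The only genuine subtlety — and the step I would flag as the main obstacle if one wanted the longer $\tau$-argument — is the compactness/attainment claim used to promote "$m$ constant" into "$m$ attained at interior points," which re-uses the scaling normalization, the two-case (vanishing versus non-vanishing) Lions dichotomy, and the $\mathcal{P}(u_0)\le 0$ estimate exactly as in Lemma~\ref{conti} and the proof of Theorem~\ref{Thm1}. But since the streamlined argument in the previous paragraph sidesteps this entirely, the actual proof is immediate: combine Lemma~\ref{nonincrea} (to sandwich $m$) with Lemma~\ref{stric} (to break the sandwich).
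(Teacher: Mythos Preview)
Your lean argument is correct and is exactly what the paper intends: Lemma~\ref{stric} gives $m(c)>m(c'')$ for some $c''\in(c,c')$, and then Lemma~\ref{nonincrea} gives $m(c'')\ge m(c')$, so $m(c)>m(c')$. The longer $\tau$-argument is an unnecessary detour (and its attainment step would require essentially reproving Lemma~\ref{lem6}), but you already recognized this and landed on the one-line proof the paper has in mind.
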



\section{Proof of Theorem \ref{Thm1}}
\setcounter{equation}{0}
\renewcommand\theequation{4.\arabic{equation}}
\renewcommand\thetheorem{4.\arabic{theorem}}
In this section, we shall prove Theorem \ref{Thm1}. Firstly, we show the minimizer of $J(u)$ constrained on $\mathcal{S}_{c}\cap\mathcal{M}$ is attained.

\begin{lemma}\label{lem6}  Assume that $(f_{1})-(f_{5})$ hold. Then there exists $u_0\in \mathcal{S}_{c}\cap\mathcal{M}$ such that $J(u_0)=m(c).$
\end{lemma}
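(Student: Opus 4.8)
The plan is to run the concentration-compactness program outlined in the introduction. Start by taking a minimizing sequence $\{u_n\}\subset\mathcal{S}_c\cap\mathcal{M}$ with $J(u_n)\to m(c)$. The first task is boundedness of $\{u_n\}$ in $H^1(\mathbb{R}^N)$. Since $\|u_n\|_2^2=c$ is fixed, one only needs $\|\nabla u_n\|_2$ bounded. If not, set $w_n=\frac{1}{s_n}\star u_n$ with $s_n=\|\nabla u_n\|_2\to\infty$, so $\|\nabla w_n\|_2=1$ and $\|w_n\|_2^2=c$; argue by the dichotomy for $\{w_n\}$ exactly as in Lemma \ref{conti}: non-vanishing of $\{w_n\}$ forces $s_n^{-2}J(u_n)\to-\infty$ via $(f_4)$, contradicting $J(u_n)\to m(c)>0$ (Lemma \ref{lem4}); vanishing of $\{w_n\}$ gives $\|w_n\|_{p_1}\to 0$, and then optimizing $s\mapsto s^2/2 - s^{2^*}(\eta+\epsilon)S^{N/(2-N)}$ yields the lower bound $m(c)\ge \frac{1}{N}(2^*(\eta+\epsilon))^{(2-N)/2}S^{N/2}$, which contradicts the upper bound $m(c)\le C\mu^{-4/(N(p-2)-4)}$ from \eqref{2.21} once $\mu>\mu_0$ is large. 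This is where the largeness of $\mu$ enters, and fixing $\mu_0$ accordingly.

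Next, with $\{u_n\}$ bounded, pass to a subsequence $u_n\rightharpoonup u_0$ in $H^1(\mathbb{R}^N)$. One must rule out vanishing: if $\sup_y\int_{B(y,1)}|u_n|^2\,dx\to 0$ then $\|u_n\|_{p_1}\to 0$, and the same argument as in the vanishing case above (now using $\|\nabla u_n\|_2^2\to l>0$ by Lemma \ref{lem2}) produces $m(c)\ge\frac{1}{N}(2^*(\eta+\epsilon))^{(2-N)/2}S^{N/2}$, again contradicting \eqref{2.21}. So non-vanishing holds; after translating $u_n$ (which does not change $J$, $\mathcal{P}$, or the $L^2$ norm) we may assume $u_0\neq 0$, with $c_0:=\|u_0\|_2^2\in(0,c]$.

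The core estimate is to show $\mathcal{P}(u_0)\le 0$. Write $u_n=u_0+v_n$ with $v_n\rightharpoonup 0$. I would use a Brezis-Lieb type splitting for both the Sobolev-critical term and the lower-order terms: $\int F(u_n) = \int F(u_0) + \int F(v_n) + o(1)$ and the analogous splitting for $\int H(u_n)$ and $\int|\nabla u_n|^2 = \int|\nabla u_0|^2 + \int|\nabla v_n|^2 + o(1)$. From $\mathcal{P}(u_n)=0$ this gives $\mathcal{P}(u_0) + \mathcal{P}(v_n) \to 0$, i.e. $\mathcal{P}(u_0) = -\lim\mathcal{P}(v_n)$; then I need $\liminf \mathcal{P}(v_n)\ge 0$. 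For this, the key is an inequality of the type $\mathcal{P}(v)\ge 0$ (or at least asymptotically nonnegative) for functions that are "small" in a suitable sense, combined with the monotonicity assumption $(f_3)$; the vanishing-type estimates from Lemma \ref{lem2} applied to $v_n$ (whose gradient may still be bounded away from zero is the delicate point) should give $\mathcal{P}(v_n)\ge o(1)$ because the critical term alone cannot make $\mathcal{P}$ strictly negative without also violating the energy lower bound. Once $\mathcal{P}(u_0)\le 0$, Lemma \ref{lem1} provides $t_0\in(0,1]$ with $t_0\star u_0\in\mathcal{S}_{c_0}\cap\mathcal{M}$ and, using the sign of $\frac{d}{dt}J(t\star u_0)=t^{-1}\mathcal{P}(t\star u_0)$ together with $(f_3)$, the inequality $J(u_0)-J(t_0\star u_0)\ge\frac{1-t_0^2}{2}\mathcal{P}(u_0)$. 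Then the chain
\begin{equation*}
m(c) = \lim J(u_n) \ge J(u_0) - \tfrac{1}{2}\mathcal{P}(u_0) \ge J(t_0\star u_0) - \tfrac{t_0^2}{2}\mathcal{P}(u_0) \ge J(t_0\star u_0) \ge m(c_0)
\end{equation*}
uses weak lower semicontinuity of $J-\frac12\mathcal{P}$ (which, after cancellation, is essentially $\frac{2-N}{2N}\int|\nabla\cdot|^2 \cdot(\text{const}) + \text{const}\int H(\cdot)$ — a combination one checks is weakly l.s.c. using $(f_3)$ and Fatou). Combined with $m(c)\le m(c_0)$ from Lemma \ref{nonincrea}, all inequalities are equalities: $\mathcal{P}(u_0)=0$, $t_0=1$, $J(u_0)=m(c_0)=m(c)$. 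Then Lemma \ref{decrea} (strict decrease of $m$ at any mass where the infimum is attained) forces $c_0=c$, so $u_0\in\mathcal{S}_c\cap\mathcal{M}$ and $J(u_0)=m(c)$.

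The main obstacle I anticipate is the step $\mathcal{P}(u_0)\le 0$: the Brezis-Lieb splitting for the genuinely Sobolev-critical nonlinearity $F$ (only governed by the asymptotic ratio $(f_2)$, not an explicit power) requires care — one needs a Brezis-Lieb lemma valid under $(f_1)$-$(f_2)$, and then must control $\mathcal{P}(v_n)$ from below, which is subtle precisely because the remainder $v_n$ can carry nonvanishing gradient energy and a nonnegligible critical term. Handling this is where $(f_3)$ (monotonicity of $H(s)/|s|^{2+4/N}$) and the energy lower bound $m(c)\ge\frac1N(2^*\eta)^{(2-N)/2}S^{N/2}$ must be played against each other. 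Everything else — boundedness, exclusion of vanishing, the final bootstrapping via monotonicity of $m$ — is a direct transcription of the arguments already carried out in Lemmas \ref{lem2}, \ref{conti}, \ref{nonincrea}, and \ref{decrea}.
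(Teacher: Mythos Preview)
Your overall plan tracks the paper closely, and the boundedness, non-vanishing, Brezis--Lieb splitting, and final monotonicity steps are all correct in outline. The genuine gap is exactly where you flag it: the proof of $\mathcal{P}(u_0)\le 0$. Your proposed route---show $\liminf\mathcal{P}(v_n)\ge 0$ via ``vanishing-type estimates from Lemma \ref{lem2}''---does not go through in the hard case $\|\nabla v_n\|_2^2\to\alpha_0>0$. Lemma \ref{lem2} only yields $\mathcal{P}>0$ when the gradient is small; when $v_n$ carries critical-exponent concentration (nonnegligible $\|v_n\|_{2^*}$, gradient bounded away from zero) there is nothing preventing $\mathcal{P}(v_n)<0$, and the Sobolev estimate you allude to merely produces a lower bound on $\|\nabla v_n\|_2$, not a contradiction. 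Your remark that ``the critical term alone cannot make $\mathcal{P}$ strictly negative without also violating the energy lower bound'' is not substantiated and is, as stated, false.

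The paper closes this gap by a contradiction argument that runs your final chain on the \emph{remainder} $v_n$ rather than on $u_0$. Assume $\mathcal{P}(u_0)>0$; then the splitting forces $\mathcal{P}(v_n)\le 0$ for large $n$, so Lemma \ref{lem1} produces $t_{v_n}\in(0,1]$ with $t_{v_n}\star v_n\in\mathcal{M}$, and the same $(f_3)$-based integral computation gives $J(v_n)-J(t_{v_n}\star v_n)\ge\tfrac{1-t_{v_n}^2}{2}\mathcal{P}(v_n)$. Combining this with the Brezis--Lieb splitting of $J-\tfrac12\mathcal{P}=\tfrac{N}{4}\int H-\int F$ (no gradient term survives---your parenthetical computation is off) and the pointwise inequality $\tfrac{N}{4}H(s)\ge F(s)$ coming from $(f_3)$, one gets $m(c)\ge\lim J(t_{v_n}\star v_n)\ge\lim m(\|v_n\|_2^2)\ge m(c)$, forcing equality throughout and in particular $\lim J(v_n)=m(c)$. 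But $\mathcal{P}(u_0)>0$ together with $(f_3)$ gives $J(u_0)>0$, so the energy splitting $m(c)=J(u_0)+\lim J(v_n)$ yields $m(c)>m(c)$, the desired contradiction. This indirect device---scaling the remainder, not the weak limit, into $\mathcal{M}$ and invoking the monotonicity of $m$ there---is the key idea your proposal is missing.
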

\begin{proof} By Lemma \ref{lem4} and the Ekeland variational principle,
there exists a minimizing sequence $\{u_{n}\}\subset \mathcal{S}_{c}\cap\mathcal{M}$ such that
\begin{equation}\label{8-27-2}
J(u_{n})\to m(c)~~\mbox{as}~~n\to+\infty.
\end{equation}
By similar arguments as in the proof of Lemma \ref{conti}, we can get that $\{u_{n}\}$ is bounded in $H^{1}(\mathbb{R}^{N}).$
We claim that $\{u_{n}$\} is non-vanishing.
We assume by contradiction that $\{u_{n}$\} is vanishing. Then by the Lions lemma \cite{Lions84}, we deduce
\begin{equation}\label{4.1}
\int_{\mathbb{R}^{N}}|u_{n}|^{p_{1}}dx\rightarrow 0.
\end{equation}
Then, by $(f_{1})-(f_{2}), (f_{4})-(f_{5}),$ $\mathcal{P}(u_{n})=0$ and $(\ref{2.15}),$ for any $\epsilon>0,$ there exist $p<p_{1}<2^{*},$ $C_{\epsilon, p_{1}}>0$ such that
\begin{eqnarray}
\int_{\mathbb{R}^{N}}|\nabla u_{n}|^{2}dx
& \leq& \frac{N}{2}(2^{*}-2)\int_{\mathbb{R}^{N}}F(u_{n})dx \nonumber\\
& \leq& 2^{*}C_{\epsilon,p_1}\int_{\mathbb{R}^{N}}|u_{n}|^{p_{1}}dx + 2^{*}(\eta+\epsilon)\int_{\mathbb{R}^{N}}|u_{n}|^{2^{*}}dx\nonumber\\
&\leq &2^{*}(\eta+\epsilon)S^{\frac{N}{2-N}}\Big(\int_{\mathbb{R}^{N}}|\nabla u_{n}|^{2}dx\Big)^{\frac{N}{N-2}}+o_n(1). \label{8-27-1}
\end{eqnarray}
Since $\{u_n\}$ is bounded in
$H^{1}(\mathbb{R}^{N})$, by Lemma \ref{lem2}, we may assume that, up to a subsequence, $\|\nabla u_n\|_2^2\to l^*>0$.
By (\ref{8-27-1}), we obtain $l^*\geq\left(2^{*}(\eta+\epsilon)\right)^{\frac{2-N}{2}}S^{\frac{N}{2}}.$ Moreover, by $(f_{1})-(f_{2}), (f_{4}),$ (\ref{8-27-2})-(\ref{4.1}) and Lemma \ref{lem1}, for any $\epsilon>0,$ there exist $p<p_{1}<2^{*}, C_{\epsilon,p_{1}}>0$ such that for any $t>0,$ we have
\begin{align*}
& m(c)+o_n(1) \nonumber\\
& \geq  \frac{t^{2}}{2}\int_{\mathbb{R}^{N}}|\nabla u_{n}|^{2}dx-t^{\frac{N}{2}(p_{1}-2)}C_{\epsilon,p_{1}}\int_{\mathbb{R}^{N}}|u_{n}|^{p_{1}}dx-t^{2^{*}}(\eta+\epsilon)\int_{\mathbb{R}^{N}}|u_{n}|^{2^{*}}dx \nonumber\\
& \geq  \frac{t^{2}}{2}\int_{\mathbb{R}^{N}}|\nabla u_{n}|^{2}dx
-t^{2^{*}}(\eta+\epsilon) S^{\frac{N}{2-N}}\Big(\int_{\mathbb{R}^{N}}|\nabla u_{n}|^{2}dx\Big)^{\frac{2^{*}}{2}}.
\end{align*}
Hence, taking $t=\left(2^{*}(\eta+\epsilon)\right)^{\frac{2-N}{4}}S^{\frac{N}{4}}(l^*)^{-\frac{1}{2}}$ and $n\to+\infty$, by (\ref{4.1}), we obtain $m(c)\geq \frac{1}{N}\left(2^{*}(\eta+\epsilon)\right)^{\frac{2-N}{2}}S^{\frac{N}{2}},$ which is contrary to (\ref{2.21}).
Therefore the claim holds. Thus there exists $u_0\in H^{1}(\mathbb{R}^{N})\setminus\{0\}$ such that up to a subsequence $u_{n}\rightharpoonup u_0$ in $H^{1}(\mathbb{R}^{N}).$
Denote $u_{n,0}=u_{n}-u_{0}.$
By the Brezis-Lieb lemma, we have
\begin{equation*}\label{27-3}
  \|u_{n}\|^{2}_{2}= \|u_{0}\|^{2}_{2}+ \|u_{n,0}\|^{2}_{2}+o_{n}(1)
 \end{equation*}
 and
 \begin{equation}\label{4-1}
 \|\nabla u_{n}\|^{2}_{2}= \|\nabla u_{0}\|^{2}_{2}+ \|\nabla u_{n,0}\|^{2}_{2}+o_{n}(1).
 \end{equation}
For convenience, we let $F_1(s)=F(s)-\eta|s|^{2^{*}}$. By $(f_1)-(f_2)$ we deduce
\begin{equation*}
  \lim\limits_{s\to 0}\frac{F_1(s)}{|s|^{2+\frac{4}{N}}}=0 \quad \mbox{and} \quad \lim\limits_{s\to +\infty}\frac{F_1(s)}{|s|^{2^{*}}}=0.
\end{equation*}
Denote $f_1=F_1'.$ Clearly $f_1\in C^{1}(\mathbb{R})$ and
\begin{equation*}
  \lim\limits_{s\to 0}\frac{f_1(s)}{|s|^{1+\frac{4}{N}}}=0 \quad \mbox{and} \quad \lim\limits_{s\to +\infty}\frac{f_1(s)}{|s|^{2^{*}-1}}=0.
\end{equation*}
By the Brezis-Lieb lemma again, using the boundedness of $\{u_{n}\}$ in $H^{1}(\mathbb{R}^{N})$, we get
\begin{equation*}
\int_{\mathbb{R}^{N}}\big(F_1(u_{n})-F_1(u_{n,0})-F_1(u_{0})\big)dx=o_{n}(1).
\end{equation*}
Then, in view of
\begin{equation}\label{4-3}
  \|u_{n}\|^{2^{*}}_{2^{*}}= \|u_{0}\|^{2^{*}}_{2^{*}}+ \|u_{n,0}\|^{2^{*}}_{2^{*}}+o_{n}(1),
 \end{equation}
it follows that
\begin{equation}\label{10-18-3}
\int_{\mathbb{R}^{N}}\big(F(u_{n})-F(u_{n,0})-F(u_{0})\big)dx=o_{n}(1),
\end{equation}
which together with (\ref{4-1}) gives
\begin{equation}\label{4-2}
J(u_{n})=J(u_{0})+J(u_{n,0})+o_{n}(1).
\end{equation}
By similar arguments as (\ref{10-18-3}), we obtain
\begin{equation}\label{10-18-1}
  \int_{\mathbb{R}^{N}}H(u_{n})dx=\int_{\mathbb{R}^{N}}H(u_{0})dx+\int_{\mathbb{R}^{N}}H(u_{n,0})dx + o_{n}(1).
\end{equation}
Indeed, defining $H_1(s)=f_1(s)s-2F_1(s), \forall s\in\mathbb{R}$, we have
\begin{equation*}
  \lim\limits_{s\to 0}\frac{H_1(s)}{|s|^{2+\frac{4}{N}}}=0 \quad \mbox{and} \quad \lim\limits_{s\to +\infty}\frac{H_1(s)}{|s|^{2^{*}}}=0.
\end{equation*}
Then
\begin{equation*}
  \lim\limits_{s\to 0}\frac{h_1(s)}{|s|^{1+\frac{4}{N}}}=0 \quad \mbox{and} \quad \lim\limits_{s\to +\infty}\frac{h_1(s)}{|s|^{2^{*}-1}}=0,
\end{equation*}
where $h_1=H_1'.$ Thus
\begin{equation*}
\int_{\mathbb{R}^{N}}\big(H_1(u_{n})-H_1(u_{n,0})-H_1(u_{0})\big)dx=o_{n}(1).
\end{equation*}
We note that $H(s)=H_1(s)+\eta(2^{*}-2)|s|^{2^{*}},$ using (\ref{4-3}) yields (\ref{10-18-1}).
Therefore we obtain
\begin{equation}\label{10-17-1}
0=\mathcal{P}(u_{n})=\mathcal{P}(u_{0})+\mathcal{P}(u_{n,0})+o_{n}(1).
\end{equation}

In what follows, we claim that $\mathcal{P}(u_{0})\leq 0.$ In fact, up to a subsequence if necessary, assuming $\alpha_n:=\int_{\mathbb{R}^{N}}|\nabla u_{n,0}|^{2}dx\rightarrow\alpha_{0}\geq0$, we divide into the following two cases:

\vspace*{4pt}\noindent\textbf{Case 1.} $\alpha_{0}=0.$ By (\ref{2.15}), we deduce
\begin{equation}\label{4.2}
\int_{\mathbb{R}^{N}} |u_{n,0}|^{q}dx\rightarrow 0 \quad \mbox{for} \ q\in(2,2^{*}).
\end{equation}
Furthermore, by $(f_{1})-(f_{2}), (f_{4}),$ for any $\epsilon>0,$ there exist $p<p_{1}<2^{*}$ and $C_{\epsilon,p_{1}}>0$ such that
\begin{equation}\label{estimate}
\int_{\mathbb{R}^{N}}F(u_{n,0})dx
  \leq \int_{\mathbb{R}^{N}}\left(C_{\epsilon,p_{1}}|u_{n,0}|^{p_{1}}+(\eta+\epsilon)|u_{n,0}|^{2^{*}}\right)dx.
\end{equation}
By (\ref{4.2})-(\ref{estimate}) and $(f_{5}),$ we get
\begin{equation*}
  0\leq \int_{\mathbb{R}^{N}}H(u_{n,0})dx
  \leq (2^{*}-2)\int_{\mathbb{R}^{N}}F(u_{n,0})dx
  \rightarrow 0,
\end{equation*}
which implies that $ \int_{\mathbb{R}^{N}}H(u_{n,0})dx\rightarrow 0$ as $n\rightarrow+\infty.$
Together with $\int_{\mathbb{R}^{N}}|\nabla u_{n,0}|^{2}dx\rightarrow 0,$ we get $\mathcal{P}(u_{n,0})\rightarrow 0$ as $n\rightarrow+\infty.$ Hence, by (\ref{10-17-1}), we obtain $\mathcal{P}(u_{0})=0$ and the claim holds.

\vspace*{4pt}\noindent\textbf{Case 2.} $\alpha_{0}>0.$ By contradiction, we assume that $\mathcal{P}(u_{0})> 0.$
By $(f_{1})-(f_{3}),$ we have
\begin{equation}\label{29-1}
  f(s)s\geq\big(2+\frac{4}{N}\big)F(s), \quad  s\in\mathbb{R}.
\end{equation}
Then
\begin{equation}\label{29-2}
  J(u_{0})=\frac{1}{2}\mathcal{P}(u_{0})+\frac{N}{4}\int_{\mathbb{R}^{N}}\left(f(u_{0})u_{0}-\big(2+\frac{4}{N}\big)F(u_{0})\right)dx>0.
\end{equation}
In view of (\ref{10-17-1}), we get $\mathcal{P}(u_{n,0})\leq 0.$ Then by Lemma \ref{lem1} and $(f_{3}),$ there exists $t_{u_{n,0}}\in (0,1]$ such that $\mathcal{P}(t_{u_{n,0}}\star u_{n,0})= 0.$
Furthermore, we have
\begin{align*}
 & J(u_{n,0})-J(t_{u_{n,0}}\star u_{n,0}) \nonumber\\
&= \frac{1-t_{u_{n,0}}^{2}}{2}\int_{\mathbb{R}^{N}}|\nabla u_{n,0}|^{2}dx+t_{u_{n,0}}^{-N}\int_{\mathbb{R}^{N}}F(t_{u_{n,0}}^{\frac{N}{2}}u_{n,0})dx-\int_{\mathbb{R}^{N}}F(u_{n,0})dx \nonumber\\
&= \int_{\mathbb{R}^{N}}\Bigg[\frac{N\big(1-t_{u_{n,0}}^{2}\big)}{4}f(u_{n,0})u_{n,0}-\Big(1+\frac{N\big(1-t_{u_{n,0}}^{2}\big)}{2}\Big)F(u_{n,0})
 \nonumber\\
 &\quad+t_{u_{n,0}}^{-N}F(t_{u_{n,0}}^{\frac{N}{2}}u_{n,0})\Bigg]dx +\frac{1-t_{u_{n,0}}^{2}}{2}\mathcal{P}(u_{n,0})\nonumber\\
&=\int_{\mathbb{R}^{N}}\int_{t_{u_{n,0}}}^{1}\left[\frac{N}{2}t|u_{n,0}|^{2+\frac{4}{N}}\Bigg(\frac{H(u_{n,0})}{|u_{n,0}|^{2+\frac{4}{N}}}-\frac{H(t^{\frac{N}{2}}u_{n,0})}{|t^{\frac{N}{2}}u_{n,0}|^{2+\frac{4}{N}}}\Bigg)\right]dtdx \nonumber\\
&\quad +\frac{1-t_{u_{n,0}}^{2}}{2}\mathcal{P}(u_{n,0})\geq  \frac{1-t_{u_{n,0}}^{2}}{2}\mathcal{P}(u_{n,0}).
\end{align*}
Denote $c_{n,0}:=\|u_{n,0}\|^{2}_{2}.$ Clearly, $c_{n,0}\leq c.$ Then by (\ref{10-18-3}), (\ref{10-18-1}), (\ref{29-1}) and Lemma \ref{nonincrea}, we obtain
\begin{align*}
m(c)&=\lim\limits_{n\rightarrow +\infty}\left(J(u_{n})-\frac{1}{2}\mathcal{P}(u_{n})\right)\\
&=\lim\limits_{n\rightarrow +\infty}\left(\frac{N}{4}\int_{\mathbb{R}^{N}}H(u_{n})dx-\int_{\mathbb{R}^{N}}F(u_{n})dx\right)\\
&=\frac{N}{4}\int_{\mathbb{R}^{N}}\left(f(u_{0})u_{0}-\big(2+\frac{4}{N}\big)F(u_{0})\right)dx+\lim\limits_{n\to+\infty}\left(J(u_{n,0})-\frac{1}{2}\mathcal{P}(u_{n,0})\right)\\
&\geq \lim\limits_{n\to+\infty}\left(J(u_{n,0})-\frac{1}{2}\mathcal{P}(u_{n,0})\right)\\
&\geq \lim\limits_{n\to+\infty}\left(J(t_{u_{n,0}}\star u_{0})-\frac{t_{u_{n,0}}^{2}}{2}\mathcal{P}(u_{n,0})\right)\\
&\geq \lim\limits_{n\to+\infty}J(t_{u_{n,0}}\star u_{0})\geq \lim\limits_{n\to+\infty}m(c_{n,0})\geq m(c),
\end{align*}
which implies that $\lim\limits_{n\to+\infty}\mathcal{P}(u_{n,0})=0$ and
\begin{equation}\label{27-1}
\lim\limits_{n\to+\infty}J(u_{n,0})=\lim\limits_{n\to+\infty}m(c_{n,0})=m(c).
\end{equation}
On the other hand, by (\ref{8-27-2}) and (\ref{4-2}), we have
\begin{equation*}
m(c)=J(u_{n})+o_{n}(1)= J(u_{0}) + J(u_{n,0})+o_{n}(1).
\end{equation*}
Therefore, by (\ref{29-2}) and (\ref{27-1}),
it follows that
\begin{equation*}
m(c)>m(c)-J(u_{0})=\lim\limits_{n\to+\infty}J(u_{n,0})=\lim\limits_{n\to+\infty}m(c_{n,0})=m(c),
\end{equation*}
which produces a contradiction and then the claim holds.

Since $\mathcal{P}(u_{0})\leq 0,$ as before, one can see that there exists $t_0\in (0,1]$ such that $t_0\star u_0\in \mathcal{S}_{c_0}\cap \mathcal{M}$ and
\begin{equation}\label{4.5}
J(u_0)-J(t_0\star u_0)\geq \frac{1-t_0^{2}}{2}\mathcal{P}(u_0).
\end{equation}
Denote $c_0=\int_{\mathbb{R}^{N}}|u_0|^{2}dx.$ Clearly, $c_0\in (0, c].$
Therefore, by (\ref{10-18-3}), (\ref{10-18-1}), (\ref{29-1}), (\ref{4.5}) and  Lemma \ref{nonincrea}, we obtain
\begin{eqnarray*}\label{4.7}
&&\hspace*{-0.6cm}m(c) \nonumber\\
&=&\lim_{n\rightarrow \infty}\left(J(u_{n})-\frac{1}{2}\mathcal{P}(u_{n})\right)\nonumber\\
&=&\lim\limits_{n\rightarrow +\infty}\left(\frac{N}{4}\int_{\mathbb{R}^{N}}H(u_{n})dx-\int_{\mathbb{R}^{N}}F(u_{n})dx\right)\nonumber\\
&=&J(u_0)-\frac{1}{2}\mathcal{P}(u_0)+\lim\limits_{n\to+\infty}\frac{N}{4}\int_{\mathbb{R}^{N}}\Big(f(u_{n,0})u_{n,0}-\big(2+\frac{4}{N}\big)F(u_{n,0})\Big)dx\nonumber\\
&\geq& J(t_0\star u_0)-\frac{t_0^{2}}{2}\mathcal{P}(u_0) \geq m(c_0)\ge m(c)
\end{eqnarray*}
which implies $m(c_{0})=m(c)$ and $\mathcal{P}(u_{0})=0,$ that is $t_{0}=1.$ Then we obtain $u_0\in \mathcal{S}_{c}\cap \mathcal{M}$ and $J(u_0)=m(c_0).$
Using Lemma \ref{decrea} at $c_{0}$ and $m(c_{0})=m(c),$ we deduce $c_{0}=c$ and thus $J(u_0)=m(c).$
\end{proof}

\begin{proof}[{Completion of the proof of Theorem \ref{Thm1}}.]
Set $E(u):=J(t_{u}\star u),$ where $u\in\mathcal{S}_{c}$ and $t_{u}\star u\in\mathcal{M}.$ By Lemma \ref{lem1} and Lemma \ref{lem4}, we get
$$m(c)=\inf\limits_{u\in\mathcal{S}_{c}\cap\mathcal{M}}J(u)=\inf\limits_{u\in\mathcal{S}_{c}}\max\limits_{t>0}J(u)
      =\inf\limits_{u\in\mathcal{S}_{c}}J(t_{u}\star u)=\inf\limits_{u\in\mathcal{S}_{c}}E(u).$$
Using Lemma \ref{lem6}, we obtain $u_{0}\in\mathcal{S}_{c}\cap \mathcal{M}$ such that $J(u_{0})=m(c).$ Then there exists $v_{0}\in\mathcal{S}_{c}$ such that $t_{v_{0}}\star v_{0}=u_{0}$ and $E(v_{0})=J(t_{v_{0}}\star v_{0})=J(u_{0})=m(c),$ which implies $v_{0}$ is a minimizer of $E|_{\mathcal{S}_{c}}.$

We claim that for any $u\in\mathcal{S}_{c}$ and $\psi\in T_{u}\mathcal{S}_{c},$ $E\in \mathcal{C}^{1}(\mathcal{S}_{c},\mathbb{R})$ and
\begin{equation}\label{9-1}
J'(t_{u}\star u)(t_{u}\star\psi)=E'(u)\psi.
\end{equation}
For $|s|$ small enough, we estimate $E(u+s\psi)-E(u)$.
By Lemma \ref{lem1}, for any $u\in\mathcal{S}_{c},$ there exists $t_{u}$ such that $t_{u}\star u\in\mathcal{M}$ and $J(t_{u}\star u)>J(t\star u)$ for any $t\neq t_{u}.$
Then, for any $\psi\in T_{u}\mathcal{S}_{c},$ by the mean value theorem we obtain
\begin{align*}
& E(u+s\psi)-E(u)\nonumber\\
&=J\big(t_{u+s\psi}\star(u+s\psi)\big)-J(t_{u}\star u) \nonumber\\
&\leq J\big(t_{u+s\psi}\star(u+s\psi)\big)-J\left(t_{u+s\psi}\star u\right) \nonumber\\
&= \frac{1}{2}t_{u+s\psi}^{2}\int_{\mathbb{R}^{N}}\Big( 2s\nabla u\cdot\nabla \psi+s^{2}|\nabla \psi|^{2}\Big)dx\nonumber\\
&\quad-t_{u+s\psi}^{-N}\int_{\mathbb{R}^{N}}\Big[f\left(t_{u+s\psi}^{\frac{N}{2}}\left(u+\xi_{s}s\psi\right)\right)t_{u+s\psi}^{\frac{N}{2}}s\psi \Big] dx,
\end{align*}
where $s\in[0,1]$ and $\xi_{s}\in(0,1).$ Similarly,
\begin{align*}
&E(u+s\psi)-E(u) \nonumber\\
&=J\left(t_{u+s\psi}\star(u+s\psi)\right)-J(t_{u}\star u) \nonumber\\
&\geq J\big(t_{u}\star(u+s\psi)\big)-J(t_{u}\star u) \nonumber\\
&=\frac{1}{2}t_{u}^{2}\int_{\mathbb{R}^{N}}\Big( 2s\nabla u\cdot\nabla \psi+s^{2}|\nabla \psi|^{2}\Big)dx-t_{u}^{-N}\int_{\mathbb{R}^{N}}f\left(t_{u}^{\frac{N}{2}}(u+\zeta_{s}s\psi)\right)t_{u}^{\frac{N}{2}}s\psi dx.
\end{align*}
By Lemma \ref{lem3} (i), we get $\lim\limits_{s\rightarrow 0}t_{u+s\psi}=t_{u}.$
Hence, we have
$$\lim\limits_{s\rightarrow 0}\frac{E(u+s\psi)-E(u)}{s}
=t_{u}^{2}\int_{\mathbb{R}^{N}}\nabla u \cdot \nabla \psi dx-t_{u}^{-N}\int_{\mathbb{R}^{N}}f(t_{u}^{\frac{N}{2}}u)t_{u}^{\frac{N}{2}}\psi dx.$$
By $(f_{1}),$ the H\"{o}lder inequality and Gagliardo-Nirenberg inequality, it follows that the G\^{a}teaux derivative of $E$ is bounded linear in $\psi.$ In view of Lemma \ref{lem3} (i), it is continuous in $u.$ Therefore, by Proposition 1.3 in \cite{W1996}, we deduce $E: \mathcal{S}_{c}\rightarrow \mathbb{R}$ is of class $\mathcal{C}^{1}.$ Furthermore, by direct computations,
\begin{eqnarray*}
E'(u)(\psi)&=&\int_{\mathbb{R}^{N}}\nabla(t_{u}\star u) \cdot \nabla(t_{u}\star\psi) dx-\int_{\mathbb{R}^{N}}f(t_{u}\star u)(t_{u}\star\psi)dx\nonumber\\
&=&J'(t_{u}\star u)(t_{u}\star \psi).
\end{eqnarray*}
Hence the claim holds.

Now, by (\ref{9-1}) we deduce
\begin{displaymath}
\begin{aligned}
\|dJ(u_{0})\|_{(T_{u_{0}}\mathcal{S}_{c})^{*}}
&=\sup\limits_{\phi\in T_{u_{0}}\mathcal{S}_c, \|\phi\|\le1} \Big| dJ(u_{0})[\phi]\Big|\\
&=\sup\limits_{\phi\in T_{u_{0}}\mathcal{S}_c, \|\phi\|\le1} \Big|dJ(t_{v_{0}}\star v_{0})\big[t_{v_{0}}\star(t^{-1}_{v_{0}}\star\phi)\big]\Big|\\
&=\sup\limits_{\phi\in T_{u_{0}}\mathcal{S}_c, \|\phi\|\le1} \Big|dE(v_{0})\big[t_{v_{0}}^{-1}\star\phi\big]\Big|\\
&\leq\|dE(v_0)\|_{(T_{v_{0}}\mathcal{S}_{c})^{*}}\cdot\sup\limits_{\phi\in T_{u_0}\mathcal{S}_c, \|\phi\|\le1} \|t_{v_0}^{-1}\star\phi\| \\
&=\|dE(v_0)\|_{(T_{v_{0}}\mathcal{S}_{c})^{*}}\cdot\sup\limits_{\phi\in T_{u_0}\mathcal{S}_c, \|\phi\|\le1}\Big(t_{v_0}^{-1}\|\nabla \phi\|_{2}+\|\phi\|_{2}\Big)\\
&\leq \max\left\{t_{v_{0}}^{-1},1\right\}\|dE(v_{0})\|_{(T_{v_{0}}\mathcal{S}_{c})^{*}}=0.
\end{aligned}
\end{displaymath}
It follows that
$u_{0}$ is a critical point of $J|_{\mathcal{S}_c}.$ Using $(f_{5})$, by standard arguments it follows that, for some $\lambda<0$,
 $u_0$ weakly solves (\ref{eq01}). In view of $J(u_0)=\inf\limits_{u\in\mathcal{S}_c}J(u)=m(c)$, we infer
that $u_{0}$ is a normalized ground state solution of problem (\ref{eq01}).
This completes the proof.
\end{proof}

\medskip
Received February 2024; revised March 2024; early access March 2024.
\medskip

\end{document}